\newcommand{\R}{\mathbb{R}}
\newcommand{\C}{\mathbb{C}}
\newcommand{\N}{\mathbb{N}}
\newcommand{\xv}{\mathbf{x}}
\newcommand{\bv}{\mathbf{b}}
\newcommand{\tsteps}{m}
\newcommand{\txv}{\tilde{\mathbf{x}}}
\newcommand{\uv}{\mathbf{u}}
\newcommand{\yv}{\mathbf{y}}
\newcommand{\wv}{\mathbf{w}}
\newcommand{\wvh}{\hat{\mathbf{w}}}
\newcommand{\vb}{\mathbf{v}}
\newcommand{\zv}{\mathbf{z}}
\newcommand{\tzv}{\tilde{\mathbf{z}}}
\newcommand{\zvh}{\hat{\mathbf{z}}}
\newcommand{\F}{\mathbf{F}}
\newcommand{\tF}{\tilde{\mathbf{F}}}
\newcommand{\nx}{n}
\newcommand{\nt}{N}
\newcommand{\tra}{\prime}
\newcommand{\A}{\mathbf{A}}
\newcommand{\B}{\mathbf{B}}
\newcommand{\Id}{\mathbf{I}}
\newcommand{\lam}{\lambda}
\newcommand{\Or}{\mathbb{O}}
\newcommand{\Oz}{\mathbf{0}}
\newtheorem{theorem}{Theorem}
\newtheorem{lemma}{Lemma}
\newtheorem{problem}{Problem}
\newtheorem{remark}{Remark}
\newtheorem{corollary}{Corollary}
\begin{document}
\title{An efficient quantum algorithm for simulating polynomial differential equations}
\author{Amit Surana, Abeynaya Gnanasekaran and Tuhin Sahai}
\maketitle

\begin{abstract}

In this paper, we present an efficient quantum algorithm to simulate nonlinear differential equations with polynomial vector fields of arbitrary degree on quantum platforms.  Models of physical systems that are governed by ordinary differential equations (ODEs) or partial differential equation (PDEs) arise extensively in science and engineering applications and can be challenging to solve on classical computers due to high dimensionality, stiffness, nonlinearities, and sensitive dependence to initial conditions. For sparse $n$-dimensional linear ODEs, quantum algorithms have been developed which can produce a quantum state proportional to the solution in $\textsf{poly}(\log(\nx))$) time using the quantum linear systems algorithm (QLSA). Recently, this framework was extended to systems of nonlinear ODEs with quadratic polynomial vector fields by applying Carleman linearization that enables the embedding of the quadratic system into an approximate linear form. A detailed complexity analysis was conducted which showed significant computational advantage under certain conditions. We present an extension of this algorithm to deal with systems of nonlinear ODEs with $k$-th degree polynomial vector fields for arbitrary (finite) values of $k$. The steps involve: 1) mapping the $k$-th degree polynomial ODE to a higher dimensional quadratic polynomial ODE; 2) applying Carleman linearization to transform the quadratic ODE to an infinite-dimensional system of linear ODEs; 3) truncating and discretizing the linear ODE and solving using the forward Euler method and QLSA. Alternatively, one could apply Carleman linearization directly to the $k$-th degree polynomial ODE, resulting in a system of infinite-dimensional linear ODEs, and then apply step 3. This solution route can be computationally more efficient. We present detailed complexity analysis of the proposed algorithms and prove polynomial scaling of runtime on $k$. We demonstrate the computational framework on a numerical example.
\end{abstract}

\section{Introduction}
Models of physical systems that are governed by ordinary differential equations (ODEs) or partial differential equation (PDEs) arise extensively in science and engineering applications. Examples include nonlinear ODEs in molecular dynamics and epidemiology; Euler and Navier-Stokes PDEs in fluid dynamics, weather forecasting, plasma magnetohydrodynamics, and astrophysics; reaction-diffusion PDEs in chemistry, biology and ecology to name a few. Despite tremendous progress in the state-of-the-art algorithms for solving these equations on classical platforms, these methods are unable to address challenges related to curse-of-dimensionality, highly nonlinear dynamics, and strongly coupled degrees of freedom over multiple length- and time- scales. Consequently, quantum algorithms, due to inherent potential advantages that typically result in polynomial- to exponential- acceleration over their classical counterparts, could be key enablers to deal with some of these challenges. However, quantum algorithms for simulating dynamical systems governed by ODEs/PDEs are still in a nascent stage of development and will require substantial progress before they can be applicable to a wide range of systems. In this work, we significantly expand the domain of applicability of these methods by pushing the state-of-the-art quantum algorithms to arbitrary (finite) degree polynomial ODEs. Our algorithms maintain their exponential advantage over classical methods under prescribed conditions. 

For linear differential equations, it has been demonstrated that quantum algorithms offer the prospect of rapidly characterizing the solutions of high-dimensional systems of linear ODEs \cite{berry2014high,berry2017quantum,childs2020quantum,krovi2022improved}, and PDEs \cite{childs2021high,costa2019quantum,linden2020quantum,montanaro2016quantum}. These algorithms produce a quantum state proportional to the solution of a sparse (or block-encoded) $\nx$-dimensional system of linear differential equations in time $\textsf{poly}(\log(\nx))$) using the quantum linear system algorithm (QLSA). For nonlinear differential equations a variety of different frameworks have been explored. For ODEs with polynomial nonlinearities, a quantum algorithm was proposed which simulates the system by storing multiple copies of the solution~\cite{leyton2008quantum}. The complexity of this approach is $\textsf{poly}(\log(\nx))$) in dimension but exponential in the evolution time $T$, scaling as $\mathcal{O}(\epsilon^{-T})$ (where, $\epsilon$  is allowed error in the solution). This is consequence of the requirement that one has to use an exponentially increasing number of copies to accurately capture the nonlinearity. Recently, by applying Carleman linearization it was shown that the exponential time dependence can be reduced to polynomial scaling in $T$ in certain settings \cite{liu2021efficient}, thereby providing an exponential improvement for those use cases. Here the quantum algorithm is based on applying truncated Carleman linearization to quadratic ODEs to transform them into a finite set of linear ODEs, and solving them using a combination of forward Euler numerical method and QLSA. The authors performed a detailed complexity analysis based on an improved convergence theorem for Carleman linearization, a bound for the global error of the Euler method, an upper bound on the condition number of the linear algebraic equations, and a lower bound on the success probability of the final measurement. Specifically, assuming $R_2<1$, where $R_2$ is a parameter characterizing the ratio of the nonlinearity and forcing to the linear dissipation, the query/gate complexity of proposed algorithm takes the form $\frac{sT^2 q}{\epsilon}\textsf{poly}(\log T, \log \nx, \log 1/\epsilon)$, where, $q=\|\xv(0)\|/\|\xv(T)\|$ measures decay of the solution $\xv(t)$, and $s$ is sparseness of system matrices describing the ODE. Alternate techniques based on linear representation of dynamical systems such as Koopman-von Neumann mechanics and Lioville approaches \cite{jin2022time,joseph2020koopman,lin2022koopman}, and Koopman framework for ergodic dynamical systems \cite{giannakis2022embedding} have been proposed for simulating nonlinear ODEs on quantum platform. For a comparison of advantages and disadvantages of these approaches, we refer the reader to \cite{jin2022time,lin2022koopman}.

In this paper, we generalize the Carleman linearization based framework, developed in \cite{liu2021efficient}, for simulating quadratic polynomial ODEs on quantum computers  to nonlinear ODEs with $k$-th degree polynomial vector fields for arbitrary (finite) values of $k$. The steps involve: 1) mapping the $k$-th degree polynomial ODE to a higher-dimensional quadratic polynomial ODE; 2) applying Carleman linearization to transform the quadratic ODE to an infinite-dimensional system of linear ODEs; 3) truncating and discretizing the linear ODE and then finally solving it using the forward Euler method and QLSA. Leveraging the analysis presented in \cite{liu2021efficient}, we  also present a complexity analysis of the proposed algorithm, proving that under the assumption that $R_k<1$ (which is a higher order generalization of $R_2$), the query/gate complexity of the proposed algorithm takes the form $\frac{sk(k-1)T^2 q_k}{\epsilon}\textsf{poly}(\log T, (k-1)\log \nx, \log 1/\epsilon)$, where $s,\epsilon,T$ are as stated above, and $q_k=\frac{\sqrt{\sum_{i=1}^{k-1}\|\xv(0)\|^{2i}}}{\sqrt{\sum_{i=1}^{k-1}\|\xv(T)\|^{2i}}}$  measures the decay of the solution. Thus, our analysis generalizes the complexity analysis result from \cite{liu2021efficient}, showing an explicit polynomial dependence on the degree $k$ of the polynomial vector field.

The outline of paper is as follows. In Sections~\ref{mathprelim} and~\ref{sec:carl}, we cover mathematical preliminaries and review Carleman linearization framework that includes a general approach to transforming arbitrary $k$-th degree polynomial ODE systems to higher dimensional quadratic polynomial ODE systems, respectively. We review the complexity analysis results for quadratic ODE systems from \cite{liu2021efficient} in Section \ref{sec:QCODE}, and present an analogous analysis for $k$-th degree polynomial ODE systems. Numerical results are presented in Section \ref{sec:num}, followed by conclusions and proposed future work in Section \ref{sec:conc}.

\section{Mathematical Preliminaries}\label{mathprelim}
Let $\N = \{1, 2,\dots\}$, $\R$, and $\C$ be the sets of positive integers,  real numbers, and  complex numbers respectively. Let $\Id_{n\times n}$
denote the identity matrix of order $n$. Hereafter, $\|\cdot\|_p,p=1,2,\dots,\infty$ denotes the $l_p$-norm in the Euclidean space $\R^n$,
\begin{equation}\label{eq:normx}
\|\xv\|_p=\left(\sum_{j=1}^n|x_j|^p\right)^{1/p},\quad \xv\in \R^n.
\end{equation}
For the norm of a matrix $\A =(a_{ij})\in \R^{n\times m}$, we use the induced norm, namely
\begin{equation}\label{eq:normA}
\|\A\|_p=\max_{\xv \in \R^m} \frac{\|\A \xv\|_p}{\|\xv\|_p}.
\end{equation}
We will use $p=2$, i.e., $l_2$ norm for vectors and spectral norm for matrices if not stated otherwise. In the following, $\A^\tra$ will denote the transpose of $\A$.

For any pair of vectors, $\xv\in \R^n$ and $\yv \in \R^m$, their Kronecker product $\wv\in\R^{nm}$  is,
\begin{equation}\label{eq:kron}
\wv=\xv\otimes\yv= (x_1y_1, x_1y_2, . . . , x_1y_m, x_2y_1, . . . , x_2y_m, . . . , x_ny_1, . . . , x_ny_m)^\tra.
\end{equation}
We use an analogous definition for matrices. If  $\A\in\R^{m\times n}$ and $\mathbf{B}\in\R^{p\times q}$, then $\mathbf{C}\in \R^{mp\times nq}$ is
\begin{equation}\label{eq:kronM}
\mathbf{C}=\A\otimes\mathbf{B}=\left(
                                 \begin{array}{ccc}
                                   a_{11}\mathbf{B} & \cdots & a_{1n}\mathbf{B} \\
                                   \vdots & \vdots & \vdots \\
                                    a_{m1}\mathbf{B} & \cdots & a_{mn}\mathbf{B} \\
                                 \end{array}
                               \right).
\end{equation}
\begin{lemma}
Let $\A\in\R^{m\times n}$ and $\mathbf{B}\in\R^{p\times q}$, then the spectral norm of their Kronecker product satisfies,
\begin{equation}\label{eq:normKron}
\|\A\otimes\mathbf{B}\|_2=\|\A\|_2 \|\mathbf{B}\|_2.
\end{equation}
\end{lemma}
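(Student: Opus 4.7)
The plan is to establish the equality via two matching inequalities, using the mixed-product identity $(\A\otimes\B)(\xv\otimes\yv)=(\A\xv)\otimes(\B\yv)$ together with the fact that $\|\xv\otimes\yv\|_2=\|\xv\|_2\|\yv\|_2$ (which is immediate from \eqref{eq:kron} since $\|\xv\otimes\yv\|_2^2=\sum_{i,j}x_i^2 y_j^2$).

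For the lower bound $\|\A\otimes\B\|_2\ge\|\A\|_2\|\B\|_2$, I would pick unit vectors $\xv^\ast\in\R^n$ and $\yv^\ast\in\R^q$ that attain $\|\A\xv^\ast\|_2=\|\A\|_2$ and $\|\B\yv^\ast\|_2=\|\B\|_2$. Using the two facts above,
\begin{equation*}
\|\A\otimes\B\|_2 \;\ge\; \frac{\|(\A\otimes\B)(\xv^\ast\otimes\yv^\ast)\|_2}{\|\xv^\ast\otimes\yv^\ast\|_2} \;=\; \frac{\|\A\xv^\ast\|_2\,\|\B\yv^\ast\|_2}{\|\xv^\ast\|_2\,\|\yv^\ast\|_2} \;=\; \|\A\|_2\,\|\B\|_2.
\end{equation*}

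For the upper bound, I would invoke the singular value decompositions $\A=U_A\Sigma_A V_A^\tra$ and $\B=U_B\Sigma_B V_B^\tra$, and use the Kronecker mixed-product property at the matrix level to write
\begin{equation*}
\A\otimes\B \;=\; (U_A\otimes U_B)\,(\Sigma_A\otimes\Sigma_B)\,(V_A\otimes V_B)^\tra.
\end{equation*}
The Kronecker product of orthogonal matrices is orthogonal (one checks $(U_A\otimes U_B)^\tra(U_A\otimes U_B)=\Id\otimes\Id=\Id$), so up to reordering this is an SVD of $\A\otimes\B$. The nonzero entries of $\Sigma_A\otimes\Sigma_B$ are exactly the pairwise products $\sigma_i(\A)\sigma_j(\B)$, so the maximum singular value of $\A\otimes\B$ equals $\sigma_{\max}(\A)\sigma_{\max}(\B)=\|\A\|_2\|\B\|_2$, giving the matching upper bound.

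The two steps together yield the equality. There is no real obstacle here: the only subtlety is verifying that $(U_A\otimes U_B)$ and $(V_A\otimes V_B)$ are orthogonal and that the diagonal-like matrix $\Sigma_A\otimes\Sigma_B$ holds exactly the products of singular values, both of which are direct consequences of the block definition \eqref{eq:kronM} and the mixed-product rule. An alternative, if one prefers to avoid the SVD, is to note $\|\A\otimes\B\|_2^2=\|(\A\otimes\B)^\tra(\A\otimes\B)\|_2=\|(\A^\tra\A)\otimes(\B^\tra\B)\|_2$ and use that the eigenvalues of a Kronecker product are pairwise products of eigenvalues of the factors; this reduces the problem to the symmetric positive semidefinite case.
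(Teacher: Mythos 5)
Your argument is correct, and it relates to the paper's proof in a simple way: the paper disposes of the lemma in one line by citing Theorem 4.2.15 of Horn and Johnson, which states that the singular values of $\A\otimes\B$ are exactly the pairwise products $\sigma_i(\A)\sigma_j(\B)$, and then using $\|\cdot\|_2=\sigma_{\max}(\cdot)$. Your SVD construction $(U_A\otimes U_B)(\Sigma_A\otimes\Sigma_B)(V_A\otimes V_B)^\tra$ is precisely a proof of that cited theorem, so in substance you are taking the same route, just self-contained rather than by reference; the one genuine subtlety you flag (that $\Sigma_A\otimes\Sigma_B$ is only diagonal up to a permutation of rows and columns when the factors are rectangular, which permutation matrices, being orthogonal, can be absorbed into $U_A\otimes U_B$ and $V_A\otimes V_B$) is handled correctly by your ``up to reordering'' remark. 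One structural observation: your two-inequality scaffolding is redundant. Once the SVD of $\A\otimes\B$ is in hand, the equality $\|\A\otimes\B\|_2=\sigma_{\max}(\A)\sigma_{\max}(\B)$ follows outright, so the separate lower-bound computation with the maximizing unit vectors $\xv^\ast\otimes\yv^\ast$ buys you nothing extra. That lower-bound argument would earn its keep only in a version of the proof that avoids the SVD entirely, e.g.\ paired with a submultiplicativity-style upper bound, or in your alternative route via the eigenvalues of $(\A^\tra\A)\otimes(\B^\tra\B)$, which is likewise valid.
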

\begin{proof}
Since, the singular values of the Kronecker product of two matrices is the product of their singular values (Theorem 4.2.15 in~\cite{horn_johnson_1991}) and $\|A\|_2 = \sigma_{\max}(A)$, the result follows immediately.
\end{proof}

The Kronecker power is a convenient notation to express all possible products of elements
of a vector up to a given order, and is denoted by,
\begin{equation}\label{eq:powx}
\xv^{[i]}=\underbrace{\xv\otimes\xv\cdots\otimes\xv}_{i-\textsf{times}},
\end{equation}
for any $i\in\N$ with the convention $\xv^{[0]}=1$. Moreover, $\mbox{dim} (\xv^{[i]})= n^i$,  and each component of $\xv^{[i]}$ is of the form $x_1^{\omega_1}x_2^{\omega_2}\cdots x_n^{\omega_n} $ for some multi-index $\mathbf{\omega}\in \N^n$  of weight $\sum_{j=1}^n\omega_j=i$.
Similarly, we denote the matrix Kronecker power by,
\begin{equation}\label{eq:powA}
\A^{[i]}=\underbrace{\A\otimes\A\dots\otimes\A}_{i-\textsf{times}}.
\end{equation}

\section{Carleman Linearization}\label{sec:carl}
In this section, we summarize the Carleman framework for linearization of nonlinear systems. Our summary is adapted from the following references \cite{amini2022carleman,forets2017explicit,kowalski1991nonlinear}.

\subsection{Higher Order Polynomials}\label{sec:carlHO}
Consider an initial value problem (IVP) for a $k-$th order inhomogeneous polynomial ODE,
\begin{eqnarray}
\dot{\xv}(t)&=&\F_0+\F_1\xv+\F_2\xv^{[2]}+\cdots+\F_k\xv^{[k]},\label{eq:gen}\\
\xv(0)&=&\xv_0\in\R^\nx,\notag
\end{eqnarray}
where, $\xv\in \R^\nx$ is an $n$-dimensional vector and $\F_i\in\R^{\nx\times \nx^i}$ are time independent matrices. Note that the index $i$ runs from $0$ to the highest polynomial degree i.e. $i=0,\hdots,k$.  Then, $\zv_i=\xv^{[i]},\,\, \text{a column vector}, \forall i \neq 0$ satisfies,
\begin{equation}\label{eq:genz}
\dot{\zv}_i=\sum_{j=0}^{k}\A^{i}_{i+j-1}\zv_{i+j-1},
\end{equation}
where, $\A^{i}_{i+j-1}\in\R^{\nx^i\times \nx^{i+j-1}}$ is given by,
\begin{equation}\label{eq:Adef}
\A^{i}_{i+j-1}=\sum_{p=1}^{i}\overbrace{\Id_{\nx\times \nx}\otimes\cdots\otimes\underbrace{\F_j}_{p-\textsf{th position}}\otimes\cdots\otimes \Id_{\nx\times \nx}}^{i \textsf{ factors}}.
\end{equation}
The matrices $\A^{i}_{i+j-1}$ satisfy the following recurrence relation (see Proposition 3.3 in \cite{forets2017explicit}),
\begin{equation}\label{eq:req}
\A^{i}_{i+j-1}=\A^{i-1}_{i+j-2}\otimes \Id_{\nx\times\nx}+\Id_{\nx\times\nx}^{[i-1]}\otimes \A^1_j,
\end{equation}
for $i\geq 2, 0 \leq j\leq k$. Note that for $i=1$, $\A^1_j=\F_j,\, \forall j$. 
\begin{lemma}\label{lemmanrm}
The spectral norm of matrices $\A^{i}_{i+j-1}$ satisfy
\begin{equation}\label{eq:Anorm}
\|\A^{i}_{i+j-1}\|_2\leq i\|\F_{j}\|_2.
\end{equation}
\end{lemma}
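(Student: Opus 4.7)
The plan is to derive the bound directly from the explicit sum representation \eqref{eq:Adef}, using the triangle inequality together with the Kronecker norm identity \eqref{eq:normKron}. Writing
\[
\A^{i}_{i+j-1} = \sum_{p=1}^{i} \mathbf{T}_p, \qquad
\mathbf{T}_p = \Id_{\nx\times\nx}\otimes\cdots\otimes \F_j \otimes \cdots \otimes \Id_{\nx\times\nx},
\]
where $\F_j$ appears in the $p$-th of $i$ factors, submultiplicativity of $\|\cdot\|_2$ under Kronecker products (applied inductively across the $i$ factors of $\mathbf{T}_p$) gives $\|\mathbf{T}_p\|_2 = \|\Id\|_2^{\,i-1}\|\F_j\|_2 = \|\F_j\|_2$. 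Summing over $p$ via the triangle inequality yields $\|\A^{i}_{i+j-1}\|_2 \le i\|\F_j\|_2$, which is precisely the claim.

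An alternative, slightly cleaner route avoids invoking \eqref{eq:normKron} for more than two factors at a time, and instead argues by induction on $i$ using the recurrence \eqref{eq:req}. The base case $i=1$ is immediate since $\A^1_j = \F_j$. For the inductive step, apply the triangle inequality to \eqref{eq:req} and then \eqref{eq:normKron} to each of the two terms to obtain
\[
\|\A^{i}_{i+j-1}\|_2 \;\le\; \|\A^{i-1}_{i+j-2}\|_2\,\|\Id_{\nx\times\nx}\|_2 \;+\; \|\Id_{\nx\times\nx}^{[i-1]}\|_2\,\|\A^1_j\|_2 \;\le\; (i-1)\|\F_j\|_2 + \|\F_j\|_2,
\]
where the inductive hypothesis is used on the first term and $\|\Id^{[i-1]}\|_2 = 1$ on the second.

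I expect no real obstacle: the lemma is a purely algebraic consequence of the fact that each summand in $\A^{i}_{i+j-1}$ is a Kronecker product with $(i-1)$ identity factors, so each contributes exactly $\|\F_j\|_2$ to the bound and there are $i$ such summands. The only minor care point is ensuring that the multi-factor Kronecker norm identity $\|\bigotimes_{\ell=1}^{i} \mathbf{M}_\ell\|_2 = \prod_{\ell=1}^{i} \|\mathbf{M}_\ell\|_2$ is justified, which follows by iterating \eqref{eq:normKron}; the induction route above sidesteps this by only ever invoking the two-factor version.
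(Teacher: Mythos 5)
Your first argument is exactly the paper's proof: expand $\A^{i}_{i+j-1}$ via \eqref{eq:Adef}, apply the triangle inequality, and use \eqref{eq:normKron} with $\|\Id_{\nx\times\nx}\|_2=1$ to get $\|\F_j\|_2$ for each of the $i$ summands. One terminological slip: you call the step "submultiplicativity," but \eqref{eq:normKron} is an equality for Kronecker products (via singular values), and your displayed computation correctly uses it as such.

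Your alternative induction on $i$ via the recurrence \eqref{eq:req} is a genuinely different and equally valid route the paper does not take: the base case $\A^1_j=\F_j$ is immediate, and the step uses only the two-factor identity \eqref{eq:normKron} on each of the two terms in \eqref{eq:req}, with the hypothesis applied at the same $j$ (note $\A^{i-1}_{i+j-2}=\A^{i-1}_{(i-1)+j-1}$, so the indices align). What this buys, as you observe, is that one never needs the iterated multi-factor identity $\|\bigotimes_{\ell=1}^{i}\mathbf{M}_\ell\|_2=\prod_{\ell=1}^{i}\|\mathbf{M}_\ell\|_2$; the paper's route needs it (a trivial induction, but left implicit there), while yours trades it for reliance on the recurrence \eqref{eq:req}, which the paper imports from Proposition 3.3 of the cited reference. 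Both proofs are complete.
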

\begin{proof}
By definition (\ref{eq:Adef}),
\begin{eqnarray}\label{eq:nr}
\|\A^{i}_{i+j-1}\|_2 &\leq& \sum_{p=1}^{i}\|\overbrace{\Id_{\nx\times \nx}\otimes\cdots\otimes\underbrace{\F_j}_{p-\textsf{th position}}\otimes\cdots\otimes \Id_{\nx\times \nx}}^{i \textsf{ factors}}\|_2,\notag\\
&\leq &\sum_{p=1}^{i}\|\F_{j}\|_2= i\|\F_{j}\|_2,
\end{eqnarray}
where we have used the triangle inequality,  equality (\ref{eq:normKron}), and the fact that $\|\Id_{\nx\times\nx}\|_2=1$. 
\end{proof}

Let $\zv=(\zv_1^\tra,\zv_2^\tra,\hdots)^\tra$ be an infinite dimensional vector, 
then $\zv$ satisfies an infinite dimensional set of linear ODEs of the form,
\begin{equation}\label{eq:carl}
\dot{\zv}=\mathcal{A}\zv+\mathcal{F}_0,
\end{equation}
with an initial condition $\zv(0)=(\xv(0)^{[1]},\xv(0)^{[2]},\hdots)^\tra$, where,
\begin{equation*}
\mathcal{A}=\left(
              \begin{array}{cccccccc}
                \A_1^1 & \A^1_2 & \A^1_3 & \cdots & \A_k^1 & 0 & 0 & \cdots \\
                \A_1^2 & \A_2^2 & \A_3^2 & \cdots  & \A^2_k  & \A^2_{k+1} & 0 & \cdots \\
                0 & \A^3_2 & \A_3^3 & \cdots & \A^3_k & \A^3_{k+1} & \A_{k+2}^3 & \cdots \\
                \vdots & \vdots & \vdots & \vdots & \vdots & \vdots & \vdots & \vdots \\
              \end{array}
            \right),\quad \mathcal{F}_0=\left(\begin{array}{c}
                                          \F_0 \\
                                          0 \\
                                          0 \\
                                          \vdots
                                        \end{array}\right).
\end{equation*}
The infinite dimensional system (\ref{eq:carl}) is known as the Carleman linearization (CL) of (\ref{eq:gen}). In practice, the above system needs to be truncated to a finite order $\nt$ that leads to a closure problem. Here we follow the standard null closure scheme which consists of eliminating the dependence on variables of order exceeding $\nt$. The $\zv_i$ variables removed in this truncation procedure start at $N-k+2$ resulting in,
\begin{eqnarray}\label{eq:carlf1}
\dot{\zvh}_i&=&\sum_{j=0}^{k}\A^{i}_{i+j-1}\zvh_{i+j-1},\quad i=1,\hdots,N-k+1,\\
\dot{\zvh}_i&=&\sum_{j=0}^{l}\A^{i}_{i+j-1}\zvh_{i+j-1},\quad i=N-k+2,\hdots,N,
\end{eqnarray}
where, $l=\min\{k,N+1-i\}$. If we consider the finite-dimensional vector $\zvh=(\zvh_1^\tra,\zvh_2^\tra,\hdots,\zvh_\nt^\tra)^\tra$, we can write the above system in matrix form leading to an IVP:
\begin{eqnarray}
\dot{\zvh}&=&\A_\nt\zvh+\F_\nt, \label{eq:carlf}\\
\zvh_i(0)&=&\xv(0)^{[i]},\quad \forall i=1,\hdots,\nt,\notag
\end{eqnarray}
where, the initial condition is compatible with that for system (\ref{eq:carl}), $\F_N=(\F_0^\tra,0,\dots,0)^\tra\in \R^{\nx_k}$ with $\nx_k=(\nx^{\nt+1}-\nx)/(\nx-1)$, and  $\A_\nt$ is a finite-dimensional $\nx_k \times \nx_k$ block upper-Hessenberg matrix.

\subsection{Conversion to Quadratic ODE}\label{sec:conv}
Introducing $\tilde{\xv}=(\xv^\tra,\dots,(\xv^{[k-1]})^\tra)^\tra$ and following the steps in Proposition 3.4 in \cite{forets2017explicit} (see also the Lemma \ref{appexLem} in the Appendix), the $k-$th order polynomial system (\ref{eq:gen}) can be transformed into the following quadratic form,
\begin{equation}\label{eq:qd}
\dot{\tilde{\xv}}=\tF_0+\tF_1\tilde{\xv}+\tF_2\tilde{\xv}^{[2]},
\end{equation}
where,
\begin{equation}\label{eq:tF1}
\tF_0=\left(\begin{array}{c}
                                          \F_0 \\
                                          0 \\
                                          0 \\
                                          \vdots\\
                                          0
                                        \end{array}\right), \quad \tF_1=\left(
        \begin{array}{ccccc}
          \A^{1}_{1} & \A^{1}_{2}& \A^{1}_{3} & \cdots & \A^{1}_{k-1} \\
          \A^{2}_1 & \A^{2}_{2} & \A^{2}_{3} & \cdots & \A^{2}_{k-1} \\
          0 & 0 & \cdots & \cdots & \cdot  \\
          \vdots & \vdots  & \vdots  & \vdots  & \vdots  \\
          0 & 0 & \cdots & \A^{k-1}_{k-2} & \A^{k-1}_{k-1} \\
        \end{array}
      \right),
\end{equation}
and,
\begin{equation}\label{eq:tF2}
\tF_2=\left(
        \begin{array}{ccccccccccccc}
          0 & \cdots & 0 & \A^1_k & 0 & \cdots& 0 & 0 & 0 & \cdots & \cdots & 0 & 0 \\
          0 & \cdots & 0 & \A^2_k & 0 & \cdots & 0 & \A^2_{k+1} & 0 & \cdots & \cdots & 0 & 0\\
          \vdots &  \vdots  &  \vdots & \vdots & \vdots & \vdots & \vdots & \vdots & \vdots & \vdots & \vdots & \vdots & \vdots \\
          0 & \cdots & 0 & \A^{k-1}_k & 0 & \cdots & 0 & \A^{k-1}_{k+1} & 0 & \cdots & \cdots & 0 & \A^{k-1}_{2(k-1)}\\
        \end{array}
      \right).
\end{equation}

\begin{lemma}\label{lem0}
The norm of $\txv_0 = (\xv_0^\tra, (\xv_0^{[2]})^\tra, \dots, (\xv_0^{[k-1]})^\tra)^\tra$ can be expressed as,
\begin{equation}\label{eq:tx0}
\|\txv_0\|_2 =\sqrt{\sum_{i=1}^{k-1} \|\xv_0\|_2^{2i}},
\end{equation}
and, the norm of $\tF_2$ as defined in (\ref{eq:tF2}) satisfies,
\begin{equation}
\| \tF_2 \|_2 \leq (k-1)\sum_{i=2}^{k} \|\F_{i} \|_2. \label{eq:nrF2}
\end{equation}
\end{lemma}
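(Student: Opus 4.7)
The plan splits into two essentially independent calculations. For the first identity, observe that $\txv_0$ is simply the vertical concatenation of the Kronecker powers $\xv_0^{[i]}$ for $i = 1, \ldots, k-1$, so $\|\txv_0\|_2^2 = \sum_{i=1}^{k-1}\|\xv_0^{[i]}\|_2^2$. Viewing $\xv_0$ as a column matrix and iterating the Kronecker spectral norm identity (\ref{eq:normKron}) yields $\|\xv_0^{[i]}\|_2 = \|\xv_0\|_2^i$, and substituting gives the claim directly.

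For the bound on $\|\tF_2\|_2$, the key idea is to decompose $\tF_2$ according to the underlying $\F_j$. Concretely, write $\tF_2 = \sum_{j=2}^{k} B_j$, where $B_j$ keeps only the blocks of the form $\A^{i}_{i+j-1}$ (those produced from $\F_j$ via the recurrence (\ref{eq:req})) and zeros out the remaining blocks of $\tF_2$. Reading off (\ref{eq:tF2}), the nonzero blocks of $\tF_2$ are exactly $\A^{i}_{i+j-1}$ at block-row $i$ and at the block-column reserved for $\zv_{i+j-1}$, ranging over $1 \le i \le k-1$ and $j \ge k-i+1$ (so that $i+j-1 \ge k$, forcing the term into the quadratic part). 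For a fixed $j \in \{2,\ldots,k\}$, the admissible row indices are $i = \max(1,k-j+1), \ldots, k-1$, and for each such $i$ the column index $i+j-1$ is strictly increasing with $i$. Thus $B_j$ has at most one nonzero block per block-row and at most one nonzero block per block-column.

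This "permuted block-diagonal" structure is the heart of the argument: after a permutation of block columns, $B_j$ becomes genuinely block-diagonal, so its spectral norm equals $\max_i \|\A^{i}_{i+j-1}\|_2$. Invoking Lemma \ref{lemmanrm}, $\|\A^{i}_{i+j-1}\|_2 \le i\|\F_j\|_2$, and since the admissible $i$ are bounded by $k-1$, we obtain $\|B_j\|_2 \le (k-1)\|\F_j\|_2$. The triangle inequality then yields
\begin{equation*}
\|\tF_2\|_2 \;\le\; \sum_{j=2}^{k} \|B_j\|_2 \;\le\; (k-1)\sum_{j=2}^{k}\|\F_j\|_2,
\end{equation*}
which is the stated bound.

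The routine pieces are the iteration of the Kronecker norm identity and the invocation of Lemma \ref{lemmanrm}. The genuine obstacle is justifying the single-nonzero-block-per-row-and-per-column property of each $B_j$; this is essentially a bookkeeping exercise against (\ref{eq:tF2}), and the crucial observation is that at fixed $j$ the map $i \mapsto i+j-1$ is a strictly increasing function of $i$, guaranteeing that distinct admissible rows land in distinct block-columns. Without this structural fact, a naive triangle inequality over all blocks of $\tF_2$ gives a looser bound of order $\tfrac{k(k-1)}{2}\|\F_k\|_2$ on the $\F_k$ contribution, which would not suffice.
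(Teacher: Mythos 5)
Your proof is correct and takes essentially the same route as the paper's: the paper first permutes columns to compact $\tF_2$ into a lower block-triangular matrix and then splits it along its block diagonals, and its $d$-th diagonal piece is exactly your $B_j$ with $j=k-d+1$, so your decomposition by source matrix $\F_j$, the max-block-norm evaluation of each permuted block-diagonal piece, and the appeal to Lemma \ref{lemmanrm} reproduce the paper's argument step for step. The first identity for $\|\txv_0\|_2$ is handled identically in both.
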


\begin{proof}
To show (\ref{eq:tx0}), we use the equality (\ref{eq:normKron}) leading to
\begin{align*}
\|\txv_0\|_2 &= \sqrt{\sum_{i=1}^{k-1} \|\xv_0^{[i]}\|_2^2}=\sqrt{\sum_{i=1}^{k-1} \|\xv_0\|_2^{2i}}. 
\end{align*}

In order to prove (\ref{eq:nrF2}), permute the columns of $\tF_2$ such that,
\[
\tF_2 \mathbf{P} = \begin{bmatrix}
\A_k^1 & 0 & \dots & 0 & 0 & \dots & 0 \\
\A_k^2 & \A_{k+1}^2 & \dots & 0 & 0 & \dots & 0 \\
\vdots & \vdots & \vdots & \vdots & 0 & \dots & 0 \\
\A_k^{k-1} & \A_{k+1}^{k-1} & \dots & \A_{2(k-1)}^{k-1} & 0 & \dots & 0
\end{bmatrix},
\]
where, $\mathbf{P}$ is the corresponding permutation matrix. It then follows, that
\begin{align*}
\|\tF_2\|_2 = \|\tF_2\mathbf{P}\|_2 &= \Bigg \| \begin{bmatrix}
\A_k^1 & 0 & \dots & 0  \\
\A_k^2 & \A_{k+1}^2 & \dots & 0  \\
\vdots & \vdots & \vdots & \vdots  \\
\A_k^{k-1} & \A_{k+1}^{k-1} & \dots & \A_{2(k-1)}^{k-1}
\end{bmatrix} \Bigg \|_2, \notag \\
&\leq  \Bigg \| \begin{bmatrix}
\A_k^1 & 0 & \dots & 0  \\
0 & \A_{k+1}^2 & \dots & 0  \\
\vdots & \vdots & \vdots & \vdots  \\
0 & 0 & \dots & \A_{2(k-1)}^{k-1}
\end{bmatrix} \Bigg \|_2 + \Bigg \| \begin{bmatrix}
\A_k^2 & 0 & \dots & 0  \\
\A_k^3 & \A_{k+1}^3 & \dots & 0 \\
\vdots & \vdots & \vdots & \vdots  \\
\A_k^{k-1} & \A_{k+1}^{k-1} & \dots & \A_{2(k-1)-1}^{k-1}
\end{bmatrix} \Bigg \|_2, \notag\\
&\leq \sum_{i=1}^{k-1} \Bigg \| \begin{bmatrix}
\A_k^i & 0 & \dots & 0  \\
0 & \A_{k+1}^{i+1} & \dots & 0  \\
\vdots & \vdots & \vdots & \vdots  \\
0 & 0 & \dots & \A_{2(k-1)-(i-1)}^{k-1}
\end{bmatrix} \Bigg \|_2, \\
&= \sum_{i=1}^{k-1} \max \{ \|\A^i_k\|_2, \|\A^{i+1}_{k+1}\|_2, \dots, \|\A_{2(k-1)-(i-1)}^{k-1}  \|_2 \}.
\end{align*}
Using the Lemma~\ref{lemmanrm} 
in the previous expression leads to the desired result,
\begin{align*}
    \|\tF_2\|_2 &\leq \sum_{i=1}^{k-1} \max \{i \|\F_{k-(i-1)}\|_2, (i+1) \|\F_{k-(i-1)}\|_2, \dots, (k-1) \|\F_{k-(i-1)}\|_2  \},\notag \\
    &= (k-1)\sum_{i=1}^{k-1} \|\F_{k-(i-1)}\|_2=(k-1)\sum_{i=2}^{k} \|\F_{i} \|_2.\notag
\end{align*}
A similar result was proven in \cite{forets2017explicit} for the infinity norm.

\end{proof}


\section{Carleman Linearization based Quantum Algorithms}\label{sec:QCODE}
In this section, we review CL based framework for simulating quadratic polynomial ODEs on quantum computers developed in \cite{liu2021efficient}, and generalize the approach to higher order polynomial ODEs.

\subsection{Quadratic ODE}\label{sec:QCquadODE}
Consider a system of inhomogeneous quadratic polynomial ODEs, i.e. system (\ref{eq:gen}) with $k=2$,
\begin{eqnarray}
\dot{\xv}&=&\F_0+\F_1\xv+\F_2\xv^{[2]}, \label{eq:qdc}\\
\xv(0)&=&\xv_0\in\R^\nx.\notag
\end{eqnarray}
For the above system, CL when truncated to order $\nt$, results in a finite system of linear ODEs (see Eq.~(\ref{eq:carlf})),
\begin{equation}\label{eq:carlfq}
\dot{\zvh}=\A_\nt\zvh+\bv,
\end{equation}
where, $\zvh=(\zvh_1^\tra,\zvh_2^\tra,\cdots,\zvh_\nt^\tra)^\tra$, and
\begin{equation}\label{eq:AN2}
\A_\nt=\left(\begin{array}{cccccc}
                \A_1^1 & \A^1_2 & 0 & \cdots & 0 & \cdots \\
                \A^2_1 & \A_2^2 & \A^2_3 & \cdots & 0 & \cdots \\
                0 & \A^3_2 & \A_3^3 & \A^3_4 & 0 & \cdots \\
                \vdots & \vdots & \vdots & \vdots & \vdots &\vdots  \\
                0 & 0 & 0 & 0 & \A^{N}_{N-1} & \A^{N}_N\\
              \end{array}
            \right),\quad \bv=\left(\begin{array}{c}
                                          \F_0 \\
                                          0 \\
                                          0\\
                                          \vdots \\
                                          0
                                        \end{array}\right).
\end{equation}


\begin{problem}\label{prob1}
For the system (\ref{eq:qdc}), let $\F_0,\F_1,\F_2$ be time independent matrices and $s-$sparse (i.e. have at most $s$ nonzero entries in each row and column) and $\F_1$ be diagonalizable with eigenvalues $\lam_j$ of $\F_1$ satisfying $\textsf{Re}(\lam_n)\leq\cdots\leq \textsf{Re}(\lam_1)<0$. Let us define,
\begin{equation}\label{eq:R2}
R_2=\frac{1}{|\textsf{Re}(\lambda_1)|}(\|\xv_0\| \| \F_2\|+\frac{\|\F_0\|}{\|\xv_0\|}).
\end{equation}
Assume that we are given oracles $\Or_{\F_2}$, $\Or_{\F_1}$ and $\Or_{\F_0}$ that provide the nonzero entries of $\F_i,i=0,1,2$, respectively for any desired row or column. We are also given the value of $\|\xv_0\|_2$ and an oracle $\Or_{\xv}$ that maps $|00\cdots0\rangle\in \C^n$ to a quantum state proportional to $\xv_0$. The goal is to produce a quantum state proportional to the solution  $\xv(T)$ (of system (\ref{eq:qdc})), for a given $T>0$, within a specified $l_2$ error tolerance $\epsilon>0$.
\end{problem}

\begin{remark}
Under the condition (\ref{eq:R2}), system (\ref{eq:qdc}) can be rescaled $\xv\rightarrow \frac{\overline{\xv}}{\gamma }$ to (see Appendix in \cite{liu2021efficient} for details),
\begin{equation}\label{eq:qdct}
\dot{\overline{\xv}}=\overline{\F}_0+\overline{\F}_1\overline{\xv}+\overline{\F}_2\overline{\xv}^{[2]},
\end{equation}
such that the following relations hold,
\begin{eqnarray}
   \|\overline{\F}_2\|+\|\overline{\F}_0\|&<& |\textsf{Re}(\lam_1)|, \label{eq:condt1}\\
   \|\overline{\xv}(0)\| &<& 1,\label{eq:condt2}
\end{eqnarray}
where, $\overline{\F}_0=\gamma \F_0$, $\overline{\F}_1=\F_1$, $\overline{\F}_2=\frac{1}{\gamma}\F_2$  and $\gamma$ satisfies, 
\begin{equation}\label{eq:gamma}
\gamma=\frac{1}{\sqrt{\|\xv(0)\|r_{+}}}, \quad \text{with} \quad r_{\pm}=\frac{-\textsf{Re}(\lam_1)\pm\sqrt{(\textsf{Re}(\lam_1))^2-4\|\F_2\|\|\F_0\|}}{2\|\F_2\|}.
\end{equation}
Note that under this rescaling, the value of $R_2$ in (\ref{eq:R2}) remains unchanged.
\end{remark}

\begin{theorem}\label{them1}
[See \cite{liu2021efficient}] Consider a quadratic ODE instance as defined in Problem \ref{prob1}, with its CL as defined in (\ref{eq:carlfq}). Assume $R_2<1$, as defined in (\ref{eq:R2}), and let $q=\|\xv(0)\|/\|\xv(T)\|$. There exists a quantum algorithm that produces a state that approximates $\xv(T)/\|\xv(T)\|$ with error at most $\epsilon<1$, succeeding with probability $\Omega(1)$, and has query and gate complexity of order 
\begin{equation}\label{eq:com}
\frac{sT^2 q}{\epsilon}\cdot\textsf{poly}(\log T, \log \nx, \log 1/\epsilon).
\end{equation}
\end{theorem}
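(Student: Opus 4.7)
The plan is to combine three ingredients: truncated Carleman linearization, forward-Euler time stepping, and the quantum linear systems algorithm (QLSA). After first invoking the rescaling in the remark, so that $\|\overline{\F}_0\|_2 + \|\overline{\F}_2\|_2 < |\textsf{Re}(\lam_1)|$ and $\|\overline{\xv}(0)\|_2 < 1$, I would establish a uniform Carleman truncation bound of the form
\[
\|\xv(t) - \zvh_1(t)\|_2 \leq C\, R_2^{N}\, \|\xv_0\|_2 \quad \text{for every } t\in[0,T],
\]
with $C$ depending only on the spectral gap $|\textsf{Re}(\lam_1)|(1-R_2)$. Choosing $N = O(\log(Tq/\epsilon)/\log(1/R_2))$ then pushes the block-one Carleman error below $O(\epsilon \|\xv(T)\|_2)$. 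The hypothesis $R_2<1$ is indispensable: it is exactly what forces the coupling from level $i+1$ back into level $i$ in the infinite system~(\ref{eq:carl}) to decay geometrically in $i$ and so makes truncation cheap.

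Next I would discretize (\ref{eq:carlfq}) by forward Euler on $m = O(T^2/\epsilon)$ uniform steps (the strict dissipativity of $\F_1$ keeps the hidden constants independent of $N$), so that the global error at $t=T$ is bounded by $O(\epsilon\|\xv(T)\|_2)$. The $m+1$ Euler updates are then stacked into a single sparse block-bidiagonal linear system $L\zv = \bv$ of total dimension $O(m n_k)$ with $n_k = (\nx^{\nt+1}-\nx)/(\nx-1)$, exactly as in the standard linear-ODE quantum solvers of~\cite{berry2017quantum,childs2020quantum}. I would verify that $L$ has sparsity $O(s)$ (inherited from the $\F_j$'s through (\ref{eq:Adef})) and condition number $\kappa(L) = O(m)$; the $O(m)$ bound, rather than one scaling with $n_k$, relies crucially on the strict dissipativity of $\F_1$.

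Applying QLSA to $L\zv = \bv$ then produces a quantum state $|z\rangle \propto \zv$ at cost $O(s\,\kappa(L)\,\textsf{poly}(\log(m n_k), \log(1/\epsilon)))$. The sparse-access oracle for $L$ is built from $\Or_{\F_0},\Or_{\F_1},\Or_{\F_2}$ together with the Kronecker structure (\ref{eq:Adef})--(\ref{eq:req}): each entry of any block $\A^i_{i+j-1}$ is a fixed sum of products of $\F_j$-entries, so row and column access costs only $\textsf{poly}(N, \log \nx)$. Substituting $\kappa(L) = O(m) = O(T^2/\epsilon)$ and $\log n_k = O(N \log \nx) = O(\log \nx \cdot \log(Tq/\epsilon))$ collapses the total cost into the advertised form $\frac{sT^2}{\epsilon}\textsf{poly}(\log T, \log \nx, \log(1/\epsilon))$. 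Finally I would measure the first $\nx$-dimensional block of $|z\rangle$; the bound $\|\zvh(T)\|_2^2 \le \sum_{i=1}^{N} \|\xv(T)\|_2^{2i}$ together with $\|\zvh_1(T)\|_2 \approx \|\xv(T)\|_2$ shows that the success probability of this projection is $\Omega(1/q^2)$, so $O(q)$ rounds of amplitude amplification restore the claimed $\Omega(1)$ success probability and contribute the remaining factor of $q$.

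The principal obstacle is the uniform Carleman truncation bound of the first step: controlling $\|\xv(t) - \zvh_1(t)\|_2$ over the whole interval $[0,T]$ requires a delicate induction through the infinite triangular system, trading a factor of $R_2$ at each level against the combinatorial growth from (\ref{eq:req}); this is precisely the content of the convergence theorem of~\cite{liu2021efficient}, and is where $R_2<1$ is non-negotiable. A secondary subtlety is the condition-number estimate $\kappa(L) = O(m)$: without the dissipativity of $\F_1$ one would get $\kappa(L)$ scaling with $n_k = O(\nx^{\nt})$, which would destroy the exponential advantage in $\nx$.
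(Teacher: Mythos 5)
Your proposal follows essentially the same route as the paper, which imports Theorem~\ref{them1} from \cite{liu2021efficient} and summarizes precisely this pipeline: rescaling so that (\ref{eq:condt1})--(\ref{eq:condt2}) hold, truncated Carleman linearization with exponential convergence under $R_2<1$, forward Euler stacked into the block-bidiagonal system (\ref{eq:lin}), the high-precision QLSA of \cite{childs2017quantum} with condition number $O(\tsteps)$ via dissipativity, and post-selection boosted by amplitude amplification contributing the factor $q$. The only discrepancies are polylog-sized and harmless: the paper's success-probability bound (\ref{eq:prob}) gives $\Omega(1/(\nt q^2))$ rather than your $\Omega(1/q^2)$, so $O(\sqrt{\nt}\,q)$ amplification rounds are needed, and the row sparsity of $\A_\nt$ is $O(\nt s)$ rather than $O(s)$ (each block $\A^i_{i+j-1}$ is a sum of $i$ Kronecker factors) --- both absorbed into the $\textsf{poly}(\log T,\log \nx,\log 1/\epsilon)$ factor since $\nt=O(\log(T/\epsilon)/\log(1/R_2))$.
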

A more formal version of the above theorem can be found in the Appendix in \cite{liu2021efficient}. Note that the query complexity means the number of queries the quantum algorithm makes to the oracles $\Or_{\xv}$, $\Or_{\F_2}$, $\Or_{\F_1}$, and $\Or_{\F_0}$, while the gate complexity refers to the size of the quantum circuit required to implement the quantum algorithm. Also, it is assumed that oracle $\Or_{\xv}$ can map $|00\cdots0\rangle\in \C^n$ to a quantum state proportional to $\xv(0)$ in $\mathcal{O}(\text{poly}(\log \nx))$ time.

The quantum algorithm for simulating system (\ref{eq:qdc}), that achieves the complexity stated in Theorem \ref{them1}, is based on discretizing the truncated linear ODE (\ref{eq:carlfq}) using the forward Euler method, writing the discretized equations as a system on linear algebraic equations, and solving the linear system using QLSA. Following the nomenclature in~\cite{liu2021efficient}, we refer to this quantum algorithm as the quantum Carleman linearization (QCL) algorithm.

To construct the system of linear equations, the interval $[0, T]$ is divided into $\tsteps=\lceil T/h \rceil$ time steps, and the forward Euler method is applied to (\ref{eq:carlfq}) resulting in,
\begin{equation}\label{eq:disc}
\zvh^{j+1}=[\Id+\A_Nh]\zvh^{j}+h\bv,
\end{equation}
for $j\in[\tsteps+1]_0=\{0,1,\cdots,\tsteps\}$, where $\zvh^j=\zvh(h(j-1))$ and $\zvh^0=\zvh_{in}=(\xv(0)^\tra,(\xv(0)^{[2]})^\tra,\cdots,(\xv(0)^{[\nt]})^\tra)^\tra$. Letting $\zvh^j$ be equal for $j\in[\tsteps+p+1]_0 \setminus [\tsteps+1]_0$ for sufficiently large $p$, system (\ref{eq:disc}) can be expressed in the form of linear algebraic equations,
\begin{equation}\label{eq:lin}
\left(
  \begin{array}{ccccccc}
    \Id & 0 & 0 & \cdots & 0 & \cdots & 0 \\
    -[\Id+\A_N h] & \Id & 0 & \cdots & 0 & \cdots & 0 \\
    0 & 0 & \ddots & \ddots & \ddots & 0 & 0 \\
    0 & 0 &  -[\Id+\A_N h] & \Id & 0 & \cdots & 0 \\
    0 & 0 & 0 & \cdots & -\Id & \Id & 0 \\
    0 & 0 & 0 & 0 & \ddots & \ddots & 0 \\
    0 & 0 & 0 & \cdots & 0 & -\Id & \Id \\
  \end{array}
\right)\left(\begin{array}{c}
         \zvh^0 \\
         \zvh^1 \\
         \vdots \\
         \zvh^\tsteps \\
         \zvh^{\tsteps+1} \\
         \vdots \\
         \zvh^{\tsteps+p}
       \end{array}\right)
=\left(\begin{array}{c}
         \zvh_{in} \\
         h\bv \\
         \vdots \\
         h\bv \\
         0 \\
         \vdots \\
         0
       \end{array}\right).
\end{equation}
In the above system, the first $\nx$ components of $\zvh^j$ for $j\in[\tsteps+p+1]_0\setminus[\tsteps]_0$, i.e. $\zvh_1^j$,  approximates the exact solution $\xv(T)$ of (\ref{eq:qdc}) up to a normalization constant. Using the high-precision variant of the QLSA \cite{childs2017quantum} to solve the system (\ref{eq:lin}), and post-selecting on $j$ to produce $\zvh_1^j/\|\zvh_1^j\|$, for some $j\in[\tsteps+p+1]_0\setminus[\tsteps]_0$, the resulting error is at most,
\begin{equation}\label{eq:error}
\epsilon=\max_{j\in [\tsteps+p+1]_0\setminus[\tsteps]_0}\Bigg \| \frac{\xv(T)}{\|\xv(T)\|}-\frac{\zvh_1^j}{\|\zvh_1^j\|}\Bigg \|.
\end{equation}
This error includes contributions from the CL, forward Euler method, and QLSA steps of the algorithm.  The complexity analysis result in Theorem \ref{them1} uses an improved convergence error analysis for CL, a bound on the global error of the Euler method, an upper bound on the condition number of the linear algebraic equations (\ref{eq:lin}), and a lower bound on the success probability of the final measurement. Specifically, without loss of generality, the system (\ref{eq:qdc}) is first transformed into form (\ref{eq:qdct}) such that conditions (\ref{eq:condt1}-\ref{eq:condt2}) are satisfied. Then, for the desired accuracy $\epsilon$, $\nt$ (the truncation level for Carleman) and $h$ (time step in Euler method) are selected based on the CL and Euler method's error analysis, respectively, such that $\Bigg \| \frac{\xv(T)}{\|\xv(T)\|}-\frac{\zvh_1^m}{\|\zvh_1^m\|}\Bigg \|\leq \epsilon$. These values of $\nt$ and $h$ are then used to estimate the condition number and sparsity of the linear system (\ref{eq:lin}). The query/gate complexity of QLSA for specifically solving the system (\ref{eq:lin}) are determined based on the complexity result derived in \cite{childs2017quantum}. Finally, the QLSA computational complexity is combined with the complexity for (initial) state preparation, and the number of iterations of the QCL algorithm required for achieving desired success probability of the final measurement during post selection, leading to Theorem \ref{them1}. We next summarize the state preparation and post selection process, see \cite{liu2021efficient} for further details.

The linear system (\ref{eq:lin}) can be expressed in quantum form as,
\begin{equation}\label{eq:Qlin}
L|Z\rangle=|B\rangle,
\end{equation}
where,
\begin{equation}\label{eq:Ldef}
L=\sum_{j=0}^{m+p}|j\rangle \langle j|\otimes \Id-\sum_{j=1}^{m}|j\rangle\langle j-1|\otimes [\Id+\A_N h]-\sum_{j=m+1}^{m+p}|j\rangle\langle j-1|\otimes \Id, \notag
\end{equation}
\begin{equation}\label{eq:Zdef}
|Z\rangle=\sum_{j=1}^{m+p}\sum_{i=1}^{\nt}|\zvh_i^j\rangle|i\rangle|j\rangle,\text{ and } |B\rangle=\frac{1}{\sqrt{B_m}}\left(\|\zvh_{in}\| |0\rangle\otimes|\zvh_{in}\rangle |+\sum_{j=1}^m \|h\bv \||j\rangle\otimes |h\bv\rangle\right),\notag
\end{equation}
and $B_m=\|\zvh_{in}\|^2+mh^2\|\bv\|^2$ is the normalization constant. We have used the standard ``bra" $|\cdot\rangle$ and ``ket" $\langle \cdot |$ notation to represent a quantum state and its conjugate transpose, respectively \cite{nielsen2002quantum}.  The entries of $|B\rangle$ are composed on $|\zvh_{in} \rangle$ and $|\bv h\rangle$, which can be prepared in superposition by mapping $|0\rangle\rightarrow |\zvh_{in} \rangle $ and $|0\rangle\rightarrow |\bv h\rangle$, respectively using a standard procedure. Overall, this process takes $\mathcal{O}(\nt)$ queries to $\Or_{\xv}$ and $\mathcal{O}(m)$ queries to $\Or_{\F_0}$ to prepare $|B\rangle$. Since the initial state vector $|\zvh_{in} \rangle$ is a direct sum over spaces of different dimensions, it can be cumbersome to deal with directly. Instead, one can prepare an equivalent state of the form,
\begin{equation}
\wvh_{in}=\left((\zvh_{in}\otimes \vb_0^{\nt-1})^\tra, (\zvh_{in}^{[2]}\otimes\vb_0^{\nt-2})^\tra, \cdots, (\zvh_{in}^{[\nt]})^\tra\right)^\tra, \notag
\end{equation}
which has a convenient tensor product structure. Here, $\vb_0$ is a standard vector such as, $\vb_0=|0\rangle$. Using standard techniques, all the operations that one would otherwise apply to $\zvh_{in}$ can be applied to $\wvh_{in}$, with the same effect.

For post-selection, the solution is decomposed into the form,
\begin{equation}\label{eq:Zdefdec}
|Z\rangle=|Z_b\rangle+|Z_g\rangle,
\end{equation}
where,
\begin{eqnarray}
 |Z_b\rangle  &=& \sum_{j=0}^{m-1}\sum_{i=1}^{\nt}|\zvh_i^j\rangle|i\rangle|j\rangle+\sum_{j=m}^{m+p}\sum_{i=2}^{\nt}|\zvh_i^j\rangle|i\rangle|j\rangle, \notag \\
 |Z_g\rangle  &=& \sum_{j=m}^{m+p}|\zvh_1^j\rangle|1\rangle|j\rangle.\notag
\end{eqnarray}
Note that, $\zvh_1^j=\zvh_1^m$ for all $j\in\{m,m+1,\cdots,m+p\}$. Then the probability $P_{\text{measure}}$ of measuring  $|Z_g\rangle$, i.e. solution of interest, can be lower bounded as
\begin{equation}\label{eq:prob}
P_{\text{measure}}=\frac{\||Z_g\rangle \|^2}{\||Z\rangle\|^2}\geq \frac{p+1}{9(m+p+1)\nt q^2}.
\end{equation}
Choosing $m=p$, we get $P_{\text{measure}}=\mathcal{O}(1/\nt q^2)$. Using amplitude amplification, $\mathcal{O}(\sqrt{\nt}q)$ iterations suffice to succeed with a constant probability.

\begin{remark}
The authors in \cite{liu2021efficient} also provide a lower bound on the worst-case complexity of quantum algorithms for general quadratic ODEs, showing that the problem is intractable for $R_2\geq \sqrt{2}$.
\end{remark}

\subsection{Generalization to $k$-th Order ODE}\label{sec:QCgenODE}

\begin{problem}\label{prob2}
Consider system (\ref{eq:gen}), and let $\F_i, i=0,\dots,k$ be time independent $s-$sparse matrices and $\xv_0\in \R^\nx$ be the initial condition.  Let $\tF_0,\tF_1$ and $\tF_2$ be as defined in (\ref{eq:tF1}) and (\ref{eq:tF2}), respectively. Assume $\tF_1$ is diagonalizable and its eigenvalues satisfy $\tilde{\lam}_{n_k}\leq\cdots\leq\tilde{\lam}_1<0$.  Define,
\begin{equation}\label{eq:Rk}
R_k=\frac{1}{|\textsf{Re}(\tilde{\lambda}_1)|}\left((k-1)\sum_{j=2}^k \|\F_j\|\sqrt{\sum_{i=1}^{k-1}\|\xv_0\|^{2i}}+\frac{\|\F_0\|}{\sqrt{\sum_{i=1}^{k-1}\|\xv_0\|^{2i}}}\right).
\end{equation}
Assume that we are given oracles $\Or_{\tF_i}$ that provide nonzero entries of $\tF_i,i=0,1,2$ respectively for any desired row or column. We are also given the value of $\|\xv_0\|$ and the an oracle $\Or_x$ that maps $|00\cdots0\rangle\in \C^n$ to a quantum state proportional to $\tilde{\xv}_0=(\xv_0^\tra,(\xv^{[2]}_0)^\tra,\dots,(\xv_0^{[k-1]})^\tra)^\tra$. The goal is to produce a quantum state proportional to the solution  $\xv(T)$ of the system (\ref{eq:gen}), for some given $T>0$, within specified $l_2$ error tolerance $\epsilon>0$.
\end{problem}

\begin{problem}\label{prob3}
Consider the homogeneous version of Problem \ref{prob2}, i.e. $\F_0\equiv0$, and assume that $\F_1$ is diagonalizable with its eigenvalues $\lam_j$ satisfying $\textsf{Re}(\lam_n)\leq\cdots\leq \textsf{Re}(\lam_1)<0$. Without loss of generality, we assume that $\F_1$ is diagonal with diagonal entries as its eigenvalues.  Define,
\begin{equation}\label{eq:Rk0}
R^0_k=\frac{(k-1)\sqrt{\sum_{i=1}^{k-1}\|\xv_0\|^{2i}}\sum_{i=2}^k \|\F_i\|}{|\textsf{Re}(\lambda_1)|}.
\end{equation}
\end{problem}

\begin{lemma}\label{lem2}
Let $\F_i,i=0,\dots,k$ be $s-$sparse as assumed in Problem \ref{prob2}. Let $\tF_0$, $\tF_1$ and $\tF_2$ be as defined in (\ref{eq:tF1}) and (\ref{eq:tF2}), respectively. Then the sparsity of $\tF_0$ is $s$,  and that of $\tF_1$ and $\tF_2$ is at most $s_k\leq k(k-1)s/2$.
\end{lemma}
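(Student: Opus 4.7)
The plan is to prove the three sparsity bounds by first establishing a per-block bound on the sparsity of each $\A^{i}_{i+j-1}$ appearing in $\tF_0$, $\tF_1$, and $\tF_2$, and then summing the contributions across the block rows and columns of each of these matrices. The bound for $\tF_0$ is immediate: by definition it is $(\F_0^\tra,0,\dots,0)^\tra$, so padding $\F_0$ by zero rows preserves its sparsity $s$, and this case requires no further work. The substance of the proof lies in $\tF_1$ and $\tF_2$.

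First I would show that each block $\A^{i}_{i+j-1}$ has both row sparsity and column sparsity at most $is$. Starting from the definition in (\ref{eq:Adef}) as a sum of $i$ Kronecker products of the form $\Id_{\nx\times\nx}\otimes\cdots\otimes\F_j\otimes\cdots\otimes\Id_{\nx\times\nx}$, with $\F_j$ in the $p$-th slot, I would note that Kronecker multiplication by $\Id$ does not change the per-row/per-column non-zero counts, so each of the $i$ summands inherits the sparsity $s$ of $\F_j$. The triangle inequality applied to the number of non-zero entries in each row (and column) of the sum then yields the claimed bound $is$ on the sparsity of $\A^{i}_{i+j-1}$. This mirrors the norm-bound argument used in Lemma~\ref{lemmanrm}, only at the level of non-zero counts rather than spectral norms.

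With the per-block bound $is$ in hand, I would read off the block structures of $\tF_1$ from (\ref{eq:tF1}) and of $\tF_2$ from (\ref{eq:tF2}), and identify the worst-case block column in each. For $\tF_1$, the densest block column is the rightmost one ($l=k-1$), which contains $\A^{1}_{k-1},\A^{2}_{k-1},\dots,\A^{k-1}_{k-1}$; summing the per-block bounds gives column sparsity at most $\sum_{i=1}^{k-1}is = k(k-1)s/2$. For $\tF_2$, the densest block column is $l=k$, which contains $\A^{1}_{k},\A^{2}_{k},\dots,\A^{k-1}_{k}$, yielding the same $\sum_{i=1}^{k-1}is = k(k-1)s/2$. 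A parallel count on the block rows (each block row has at most $k-1$ non-zero blocks, each of sparsity at most $is$) gives a bound of the same form, so in all cases $s_k \leq k(k-1)s/2$.

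The main obstacle is purely combinatorial: carefully verifying which blocks $\A^{i}_{l}$ actually appear in each block row and block column of the upper-Hessenberg-type $\tF_1$ and the ``shifted'' pattern of $\tF_2$, so that the dominant row/column is correctly located and the aggregate count is indeed bounded by $1+2+\cdots+(k-1)=k(k-1)/2$ times $s$. Once the indexing is pinned down, the proof reduces to this single arithmetic identity combined with the per-block sparsity bound $is$.
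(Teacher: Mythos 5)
You take essentially the same route as the paper: its proof likewise observes that pre- and post-Kronecker products with the identity leave per-row/per-column nonzero counts unchanged and that sparsity is at worst additive under matrix addition, deduces that each block $\A^{i}_{i+j-1}$ is at most $is$-sparse, and concludes with the sum $\sum_{i=1}^{k-1} is = sk(k-1)/2$. Your explicit identification of the densest block column (the last block column of $\tF_1$, and the block column of $\tF_2$ holding $\A^{1}_{k},\dots,\A^{k-1}_{k}$) merely spells out the aggregation the paper performs in one line, and that part of your argument is correct.

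The one place where your write-up overreaches is the parenthetical row count: it is not true that a parallel count on the block rows ``gives a bound of the same form.'' Block row $i$ of $\tF_2$ contains the $i$ blocks $\A^{i}_{k},\A^{i}_{k+1},\dots,\A^{i}_{k+i-1}$ (one for each $j=k-i+1,\dots,k$), each of row sparsity at most $is$, so the additive row bound is $i^{2}s$; at $i=k-1$ this is $(k-1)^{2}s$, which exceeds $k(k-1)s/2$ for every $k\geq 3$. Similarly, block row $i$ of $\tF_1$ carries up to $k-i+1$ nonzero blocks of row sparsity at most $is$, giving $i(k-i+1)s$, e.g. $4s>3s$ at $k=3$, $i=2$; your own count of ``at most $k-1$ blocks of sparsity at most $is$'' would give $(k-1)is$, which is even larger. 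So by this style of counting, $s_k\leq k(k-1)s/2$ is certified only as a \emph{column}-sparsity bound, while the worst-case row sparsity one obtains is of order $(k-1)^{2}s$. To be fair, the paper's proof is identical to yours on this point --- it writes the column sum and never examines rows --- so you have reproduced its argument, looseness included, and the discrepancy is harmless downstream: $(k-1)^{2}s$ and $k(k-1)s/2$ differ by a factor less than $2$, so the complexity estimate (\ref{eq:comg}) in Theorem \ref{them2} is unaffected. But since $s$-sparse is defined in Problem \ref{prob1} as a bound on each row \emph{and} each column, a fully rigorous version of the lemma should treat the row count separately rather than assert it by symmetry, as you (and the paper) do.
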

\begin{proof}
From the definition of $\tF_0$, it is clear that the sparsity of $\tF_0$ is the same as the sparsity of $\F_0$. It is clear that the sparsity of a matrix does not change with pre- and post-Kronecker products with an identity matrix, and the sparsity of addition of two sparse matrices is in the worst-case additive. Thus, sparsity of matrices $\A^{i}_{i+j-1}$ defined in (\ref{eq:Adef}) is at most $i\times s$. Consequently, sparsity of $\tF_1$ and $\tF_2$  in worst case is $\sum_{i=1}^{k-1}is=sk(k-1)/2$.
\end{proof}

\begin{lemma}\label{lem1}
Let $\tilde{\lam}_i,i=1,\dots,\nx_k$ be eigenvalues of $\tF_1$ as defined in (\ref{eq:tF1}). Then under the assumptions of Problem \ref{prob3}, i.e.,  $\F_0\equiv0$ and $\F_1$ is diagonalizable with its eigenvalues $\lam_j$ satisfying $\textsf{Re}(\lam_n)\leq\cdots\leq \textsf{Re}(\lam_1)<0$,  the largest eigenvalue of $\tF_1$ satisfies
\begin{equation}\label{eq:larlam}
\max_{i}\textsf{Re}(\tilde{\lam}_i)=\textsf{Re}(\lam_1).
\end{equation}
\end{lemma}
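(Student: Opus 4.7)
The plan is to exploit two structural facts about $\tF_1$ under the hypothesis $\F_0\equiv 0$. First, I claim that $\tF_1$ becomes block upper triangular. Inspecting (\ref{eq:tF1}), the only non-zero blocks strictly below the block diagonal are the sub-diagonal blocks at positions $(i,i-1)$ for $i\geq 2$, each equal to $\A^{i}_{i-1}$. By (\ref{eq:Adef}) with $j=0$, every term in the sum $\A^{i}_{i-1}=\sum_{p=1}^{i} \Id\otimes\cdots\otimes\F_0\otimes\cdots\otimes\Id$ carries a factor of $\F_0$, which vanishes by assumption. Hence the entire block sub-diagonal is zero and $\tF_1$ is block upper triangular, so its spectrum is the union of the spectra of its diagonal blocks: $\mathrm{spec}(\tF_1)=\bigcup_{i=1}^{k-1}\mathrm{spec}(\A^{i}_{i})$.

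Second, I would compute $\mathrm{spec}(\A^{i}_{i})$ using the tensor-product structure. Let $(\lambda_m,v_m)$ be the eigenpairs of $\F_1$. For any multi-index $(m_1,\ldots,m_i)\in\{1,\ldots,n\}^i$, applying the definition (\ref{eq:Adef}) to $v_{m_1}\otimes\cdots\otimes v_{m_i}$ gives
\begin{equation*}
\A^{i}_{i}\,(v_{m_1}\otimes\cdots\otimes v_{m_i})=\sum_{p=1}^{i} v_{m_1}\otimes\cdots\otimes (\F_1 v_{m_p})\otimes\cdots\otimes v_{m_i}=\Bigl(\sum_{p=1}^{i}\lambda_{m_p}\Bigr)\,(v_{m_1}\otimes\cdots\otimes v_{m_i}).
\end{equation*}
Diagonalizability of $\F_1$ guarantees that these $n^{i}$ tensor products form a basis of $\R^{\nx^{i}}$, so $\mathrm{spec}(\A^{i}_{i})=\{\sum_{p=1}^{i}\lambda_{m_p}:m_p\in\{1,\ldots,n\}\}$.

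Combining the two steps, every eigenvalue of $\tF_1$ is a sum of between $1$ and $k-1$ eigenvalues of $\F_1$ (counted with repetition). To maximize the real part of such a sum, one selects each $\lambda_{m_p}=\lambda_1$, yielding $i\,\textsf{Re}(\lambda_1)$. Since $\textsf{Re}(\lambda_1)<0$, the quantity $i\,\textsf{Re}(\lambda_1)$ strictly decreases in $i$, so its maximum over $i\in\{1,\ldots,k-1\}$ is attained at $i=1$ and equals $\textsf{Re}(\lambda_1)$, establishing the claim.

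The key insight, and the only non-routine part, is the vanishing of the block sub-diagonal: once this reduces the problem to finding the spectra of the $\A^{i}_{i}$, the rest is a textbook Kronecker-sum calculation. A subtlety worth verifying is that no contribution from a higher-$j$ term in (\ref{eq:Adef}) could land at the $(i,i-1)$ position; this is ruled out because the column index $i+j-1=i-1$ uniquely forces $j=0$, so only the $\F_0$-containing term appears there.
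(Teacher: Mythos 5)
Your proof is correct and follows essentially the same route as the paper: with $\F_0\equiv 0$ the sub-diagonal blocks $\A^{i}_{i-1}$ vanish (column index $i-1$ forcing $j=0$ in (\ref{eq:Adef})), so $\tF_1$ is block upper triangular with spectrum $\bigcup_{i=1}^{k-1}\mathrm{spec}(\A^{i}_{i})$, and the eigenvalues of each $\A^{i}_{i}$ are sums of $i$ eigenvalues of $\F_1$, whose real parts are maximized at $i=1$ since $\textsf{Re}(\lam_1)<0$. The only difference is cosmetic: the paper obtains $\mathrm{spec}(\A^{i}_{i})$ via the recurrence (\ref{eq:req}) together with Problem \ref{prob3}'s without-loss-of-generality assumption that $\F_1$ is diagonal, whereas your direct Kronecker-sum eigenvector computation works for any diagonalizable $\F_1$ without that reduction --- just note that since the eigenpairs of $\F_1$ may be complex, the tensor-product eigenvectors form a basis of $\C^{\nx^{i}}$ rather than $\R^{\nx^{i}}$.
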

\begin{proof}
When $\F_0\equiv 0$, $\A^{i}_{i-1}\equiv0,i=1,\dots,k-1$ and $\tF_1$ reduces to an upper block diagonal form. Therefore, the eigenvalues of $\tF_1$ are formed from eigenvalues of $\A_i^i,i=1,\dots,k-1$. Substituting $j=1$  in the relation (\ref{eq:req}) leads to the following recurrence relation,
\begin{equation}\label{eq:reqd}
\A^{i}_{i}=\A^{i-1}_{i-1}\otimes \Id_{\nx\times\nx}+\Id_{\nx\times\nx}^{[i-1]}\otimes \A^1_1,
\end{equation}
where, $\A_1^1=\F_1$. As $\F_1$ is diagonal, it then follows from above relation that $\A_i^i,i=1,\dots,k-1$ are diagonal with eigenvalues of the form $\sum_{j=1}^{i}\alpha_j\lam_j$ with $\alpha_j\in\N$. Since, $\textsf{Re}(\lam_n)\leq\cdots\leq \textsf{Re}(\lam_1)<0$, it follows that $\max_{i}\textsf{Re}(\tilde{\lam}_i)=\textsf{Re}(\lam_1)$.
\end{proof}

\begin{theorem}\label{them2}
Consider an instance of the $k$-the order inhomogeneous polynomial ODE as defined in Problem \ref{prob2}. Assume $R_k<1$, as defined in (\ref{eq:Rk}), and let,
\begin{equation}\label{eq:qksk}
q_k=\frac{\sqrt{\sum_{i=1}^{k-1}\|\xv_0\|^{2i}}}{\sqrt{\sum_{i=1}^{k-1}\|\xv(T)\|^{2i}}}, \quad s_k=sk(k-1)/2.
\end{equation}
Then the QCL algorithm applied to the quadratic transformation (\ref{eq:qd}) produces a state that approximates $\xv(T)/\|\xv(T)\|$ with an error at most $\epsilon<1$, succeeding with probability $\Omega(1)$, and with query and gate complexity  of order,
\begin{equation}\label{eq:comg}
\frac{s_kT^2 q_k}{\epsilon}\textsf{poly}(\log T, (k-1)\log \nx, \log 1/\epsilon).
\end{equation}
\end{theorem}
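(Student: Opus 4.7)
The plan is to reduce Theorem \ref{them2} to a direct application of Theorem \ref{them1} via the quadratic embedding of Section \ref{sec:conv}. Starting from the $k$-th order IVP (\ref{eq:gen}), the lifted vector $\tilde{\xv}=(\xv^\tra,(\xv^{[2]})^\tra,\dots,(\xv^{[k-1]})^\tra)^\tra \in \R^{n_k}$ with $n_k=(n^k-n)/(n-1)$ satisfies the quadratic system (\ref{eq:qd}). Because the first $n$ entries of $\tilde{\xv}(t)$ equal $\xv(t)$, running the quadratic QCL algorithm on the triple $(\tF_0,\tF_1,\tF_2)$ and projecting onto that subregister at the end yields the desired state proportional to $\xv(T)/\|\xv(T)\|$. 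The proof then reduces to checking that all hypotheses of Theorem \ref{them1} transfer to the lifted system and that the substituted complexity matches (\ref{eq:comg}).

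For the hypotheses, diagonalizability and left-half-plane spectrum of $\tF_1$ are built into Problem \ref{prob2}; sparsity is bounded by $s_k=sk(k-1)/2$ by Lemma \ref{lem2}; and the required oracles $\Or_{\tF_i}$ together with $\Or_x$ for $\tilde{\xv}_0$ are provided. The only remaining condition to verify is $R_2<1$ in the sense of (\ref{eq:R2}) for the lifted system. Using Lemma \ref{lem0} to supply $\|\tilde{\xv}_0\|_2=\sqrt{\sum_{i=1}^{k-1}\|\xv_0\|^{2i}}$ and $\|\tF_2\|_2\leq (k-1)\sum_{i=2}^k\|\F_i\|_2$, together with $\|\tF_0\|_2=\|\F_0\|_2$ from (\ref{eq:tF1}), direct substitution into (\ref{eq:R2}) shows that the lifted system's $R_2$ value is bounded above by $R_k$ from (\ref{eq:Rk}), so the hypothesis $R_k<1$ implies the quadratic hypothesis $R_2<1$ needed to invoke Theorem \ref{them1}.

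Next I would substitute the lifted parameters into the complexity estimate (\ref{eq:com}). Sparsity $s$ becomes $s_k$; the ambient dimension becomes $n_k$, whose logarithm satisfies $\log n_k=O((k-1)\log n)$ and is therefore absorbed into the $\text{poly}(\cdot)$ factor; and the decay ratio $q=\|\tilde{\xv}(0)\|_2/\|\tilde{\xv}(T)\|_2$ appearing in Theorem \ref{them1} equals $q_k$ from (\ref{eq:qksk}) by applying (\ref{eq:tx0}) at both $t=0$ and $t=T$. Together these substitutions produce the bound $\frac{s_k T^2 q_k}{\epsilon}\text{poly}(\log T,(k-1)\log n,\log 1/\epsilon)$ as claimed in (\ref{eq:comg}).

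The step that requires the most care, and which I expect to be the main technical obstacle, is the final readout. Theorem \ref{them1} applied to the lifted system produces a state approximating the Carleman level-1 block, namely $\tilde{\xv}(T)/\|\tilde{\xv}(T)\|$, whereas Theorem \ref{them2} claims a state approximating $\xv(T)/\|\xv(T)\|$. One must argue that the additional projection onto the first Kronecker sub-block of $\tilde{\xv}$ merges cleanly with the existing post-selection of (\ref{eq:Zdefdec})--(\ref{eq:prob}), so that the amplitude-amplification overhead is captured by $q_k$ (as stated) rather than by the larger ratio $\|\tilde{\xv}(0)\|_2/\|\xv(T)\|_2$. Equally, the error introduced by this additional projection must be combined with those coming from Carleman truncation, the forward-Euler discretization, and QLSA so that the total error in (\ref{eq:error}) remains bounded by $\epsilon$. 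Once this bookkeeping is done, the success probability is $\Omega(1)$ and the asymptotic scaling (\ref{eq:comg}) follows.
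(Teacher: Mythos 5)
Your reduction follows the paper's proof step for step in its first three paragraphs: the paper likewise verifies $\tilde{R}_2\leq R_k<1$ via Lemma \ref{lem0} together with $\|\tF_0\|=\|\F_0\|$, takes the sparsity bound $s_k=sk(k-1)/2$ from Lemma \ref{lem2}, identifies $\tilde{q}=\|\txv(0)\|/\|\txv(T)\|=q_k$ by applying (\ref{eq:tx0}) at $t=0$ and $t=T$, notes $\nx_k=(\nx^k-\nx)/(\nx-1)=\mathcal{O}(\nx^{k-1})$ so that $\log\nx_k$ is absorbed as $(k-1)\log\nx$, and then invokes Theorem \ref{them1} on the lifted system (\ref{eq:qd}). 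All of that is correct and is exactly the paper's argument.

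The gap is the readout step, which you correctly identify as the main obstacle but then leave open (``once this bookkeeping is done\dots''); this is precisely the only substantive content of the paper's proof beyond parameter substitution, so your proposal is incomplete exactly where completeness was needed. The paper's resolution involves no new quantum bookkeeping at all. First, your worry about the amplification overhead degrading to $\|\txv(0)\|/\|\xv(T)\|$ does not arise: the post-selection (\ref{eq:Zdefdec})--(\ref{eq:prob}) inside Theorem \ref{them1} targets the \emph{entire} level-one Carleman block $\hat{\tilde{\zv}}_1^m\approx\txv(T)$, so the overhead is governed by $\tilde{q}=q_k$, and the $\epsilon$ guarantee already aggregates the Carleman-truncation, Euler, and QLSA errors via (\ref{eq:error}). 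The passage from $\txv(T)$ to $\xv(T)$ is then handled by a purely classical norm estimate: restricting $\big\|\txv(T)/\|\txv(T)\|-\hat{\tilde{\zv}}_1^m/\|\hat{\tilde{\zv}}_1^m\|\big\|\leq\epsilon$ to the first $\nx$ components (which equal $\xv(T)$) gives $\big\|\xv(T)/\|\txv(T)\|-\hat{\tilde{\zv}}_{1\nx}^m/\|\hat{\tilde{\zv}}_1^m\|\big\|\leq\epsilon$, and since $\|\xv(T)\|\leq\|\txv(T)\|$ the paper concludes (its equation (\ref{eq:err}))
\begin{equation*}
\Bigg\|\frac{\hat{\tilde{\zv}}_{1\nx}^m}{\|\hat{\tilde{\zv}}_{1\nx}^m\|}-\frac{\xv(T)}{\|\xv(T)\|}\Bigg\|
=\Bigg\|\frac{\hat{\tilde{\zv}}_{1\nx}^m}{\|\hat{\tilde{\zv}}_{1\nx}^m\|}-\frac{\xv(T)}{\|\txv(T)\|}\cdot\frac{\|\txv(T)\|}{\|\xv(T)\|}\Bigg\|
\leq\Bigg\|\frac{\hat{\tilde{\zv}}_{1\nx}^m}{\|\hat{\tilde{\zv}}_{1\nx}^m\|}-\frac{\xv(T)}{\|\txv(T)\|}\Bigg\|\leq\epsilon,
\end{equation*}
so the normalized first sub-block already approximates $\xv(T)/\|\xv(T)\|$ to error $\epsilon$. (If you prefer an inequality that holds unconditionally, the triangle inequality gives $\big\|a-\hat{b}\big\|\leq\big\|a-r\hat{b}\big\|+(1-r)\leq 2\big\|a-r\hat{b}\big\|$ for unit $a,\hat{b}$ and $r\leq 1$, costing only a factor of two in $\epsilon$, which is immaterial to (\ref{eq:comg}).) Note also that the paper does not separately charge a success probability for this extraction; if one insists on physically projecting onto the first block, the success probability is $\|\xv(T)\|^2/\|\txv(T)\|^2$, which under the rescaled conditions (\ref{eq:condt1})--(\ref{eq:condt2}) is at least $1/(k-1)$, adding at most $\mathcal{O}(\sqrt{k})$ amplification rounds and leaving the stated complexity polynomial in $k$.
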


\begin{proof}
Consider the quadratic transformation (\ref{eq:qd}) of the $k$-th order polynomial system (\ref{eq:gen}) as described in Section \ref{sec:conv}. Note that $\nx_k=\textsf{dim } \txv=\frac{\nx^k-\nx}{\nx-1}$, which is $\mathcal{O}(\nx^{k-1})$ for large $\nx$. Let $\tilde{\lam}_i, i=1,\dots,\nx_k$ denote the eigenvalues of $\tF_1$. From Lemma \ref{lem0} and  $\|\tF_0\|=\|\F_0\|$, it follows that $\tilde{R}_2$ as defined in (\ref{eq:R2}) for system (\ref{eq:qd}) satisfies,
\begin{equation}\label{eq:R2g}
\tilde{R}_2=\frac{1}{|\textsf{Re}(\tilde{\lambda}_1)|}\left(\|\txv_0\| \| \tF_2\|+\frac{\|\tF_0\|}{\|\txv_0\|}\right)\leq R_k<1,
\end{equation}
and $\tilde{q}=\|\txv(0)\|/\|\txv(T)\|=\frac{\sqrt{\sum_{i=1}^{k-1}\|\xv(0)\|^{2i}}}{\sqrt{\sum_{i=1}^{k-1}\|\xv(T)\|^{2i}}}$. Furthermore, from Lemma \ref{lem2}, the sparsity of $\tF_0$ is $s$  and $\tF_1$ and $\tF_2$ both have sparsity $s_k=k(k-1)s/2$. Theorem \ref{them2} follows by applying Theorem \ref{them1} to system (\ref{eq:qd}), and by noting that the QCL algorithm produces a state $\frac{\hat{\tilde{\zv}}_1^m}{\|\hat{\tilde{\zv}}_1^m\|}$ such that $\Bigg \| \frac{\txv(T)}{\|\txv(T)\|}-\frac{\hat{\tilde{\zv}}_1^m}{\|\hat{\tilde{\zv}}_1^m\|}\Bigg \|\leq \epsilon$. This result implies $\Bigg \| \frac{\xv(T)}{\|\txv(T)\|}-\frac{\hat{\tilde{\zv}}_{1n}^m}{\|\hat{\tilde{\zv}}_1^m\|}\Bigg \|\leq \epsilon$, where $\hat{\tilde{\zv}}_{1\nx}^m$ are the first $\nx$ components of $\hat{\tilde{\zv}}_{1}^m$, and consequently,
\begin{equation}\label{eq:err}
\Bigg \| \frac{\hat{\tilde{\zv}}_{1\nx}^m}{\|\hat{\tilde{\zv}}_{1\nx}^m\|}-\frac{\xv(T)}{\|\xv(T)\|}\Bigg \|=\Bigg \|  \frac{\hat{\tilde{\zv}}_{1\nx}^m}{\|\hat{\tilde{\zv}}_{1\nx}^m\|}-\frac{\xv(T)}{\|\txv(T)\|}\frac{\|\txv(T)\|}{\|\xv(T)\|}    \Bigg \|\leq\Bigg \|    \frac{\hat{\tilde{\zv}}_{1\nx}^m}{\|\hat{\tilde{\zv}}_{1\nx}^m\|}-\frac{\xv(T)}{\|\txv(T)\|}\Bigg \|\leq  \epsilon.
\end{equation}
Note that we used the fact that $\|\xv\|\leq \|\txv\|$ to get the above result. Thus, by extracting the first $\nx$ components of $\frac{\hat{\tilde{\zv}}_1^m}{\|\hat{\tilde{\zv}}_1^m\|}$ produced by QCL, one obtains a state that approximates $\xv(T)/\|\xv(T)\|$ with error of at most $\epsilon$.
\end{proof}

\begin{corollary}\label{them3}
Consider an instance of the $k$-the order homogeneous polynomial ODE as defined in Problem \ref{prob3}. Assume $R^0_k<1$ as defined in  (\ref{eq:Rk0}). Then Theorem \ref{them2} holds for Problem \ref{prob3} with $R_k\leftarrow R_k^0$, and $q_k$ and $s_k$ as defined in (\ref{eq:qksk}).
\end{corollary}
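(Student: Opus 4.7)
The plan is to deduce the corollary as a direct specialization of Theorem~\ref{them2} to the homogeneous setting, identifying how the various quantities simplify when $\F_0\equiv 0$.

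First, I would observe that when $\F_0\equiv 0$ we have $\tF_0=\mathbf{0}$ in the quadratic embedding~(\ref{eq:tF1}), so $\|\tF_0\|_2=0$ and $\|\F_0\|=0$. Moreover, the subdiagonal blocks $\A^i_{i-1}$ in $\tF_1$ come from $\F_0$ through~(\ref{eq:Adef}) with $j=0$, so they vanish; hence $\tF_1$ becomes block upper triangular with diagonal blocks $\A^i_i$ for $i=1,\dots,k-1$. Invoking Lemma~\ref{lem1}, the spectrum of $\tF_1$ lies in $\{\sum_{j=1}^{i}\alpha_j\lam_j:\alpha_j\in\N,\ \sum_j\alpha_j=i\}$, so its eigenvalue of largest real part satisfies $\textsf{Re}(\tilde{\lam}_1)=\textsf{Re}(\lam_1)$.

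Next, I would substitute these simplifications into the defining expression~(\ref{eq:Rk}) for $R_k$. The inhomogeneous term $\|\F_0\|/\sqrt{\sum_{i=1}^{k-1}\|\xv_0\|^{2i}}$ drops out, and $|\textsf{Re}(\tilde{\lam}_1)|$ reduces to $|\textsf{Re}(\lam_1)|$, recovering exactly $R_k^0$ of~(\ref{eq:Rk0}). Consequently the hypothesis $R_k^0<1$ of Problem~\ref{prob3} is precisely the hypothesis $R_k<1$ of Theorem~\ref{them2} applied to this particular instance. The sparsity bound $s_k=sk(k-1)/2$ from Lemma~\ref{lem2} involves only $\tF_1$ and $\tF_2$, and the ratio $q_k$ in~(\ref{eq:qksk}) depends only on the trajectory $\xv(\cdot)$, so both quantities carry over verbatim. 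Applying Theorem~\ref{them2} to the resulting quadratic embedding then produces a state approximating $\xv(T)/\|\xv(T)\|$ within error $\epsilon$ with probability $\Omega(1)$ at the stated query/gate complexity.

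The only nontrivial check is that the diagonalizability requirement on $\tF_1$ inherited from Theorem~\ref{them2} is met under the weaker hypothesis of Problem~\ref{prob3} (which only demands $\F_1$ itself be diagonalizable, and WLOG diagonal). I expect this to be the main obstacle worth addressing, but it is mild: because $\tF_1$ is block upper triangular with diagonal blocks $\A^i_i$ that are already diagonal by the recurrence~(\ref{eq:reqd}), one can either argue diagonalizability directly from the block structure (the spectral decomposition of each $\A^i_i$ extends through the nilpotent-free upper-triangular pattern under generic spectral separation of the $\lam_j$), or else note that any non-diagonalizable case can be handled by an arbitrarily small perturbation that leaves the complexity estimates intact. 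With this technicality handled, the corollary follows immediately from Theorem~\ref{them2} by replacing $R_k$ with $R_k^0$ and keeping $q_k,s_k$ as in~(\ref{eq:qksk}).
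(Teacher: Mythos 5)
Your core argument is exactly the paper's proof: with $\F_0\equiv 0$ the inhomogeneous term in (\ref{eq:Rk}) drops, Lemma~\ref{lem1} gives $\textsf{Re}(\tilde{\lam}_1)=\textsf{Re}(\lam_1)$, so $R_k$ collapses to $R_k^0$ of (\ref{eq:Rk0}), and the conclusion follows by applying Theorem~\ref{them2} with $q_k$ and $s_k$ unchanged. Up to that point you have reproduced the paper's (two-line) proof correctly.

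Where you go wrong is the final paragraph on diagonalizability of $\tF_1$, and both of your proposed fixes fail. First, diagonalizability does \emph{not} follow from the block structure without a non-resonance hypothesis that Problem~\ref{prob3} does not make: the eigenvalue sums $\sum_j\alpha_j\lam_j$ of the diagonal blocks $\A^i_i$ can resonate across blocks, and the upper-triangular coupling blocks $\A^i_{i+j-1}$ (built from $\F_2,\dots,\F_{k-1}$) can then create genuine Jordan blocks. Concretely, take $\nx=2$, $k=3$, $\F_1=\mathrm{diag}(-2,-4)$, so $\lam_2=\lam_1+\lam_1$, and choose $\F_2$ with $\F_2(2,1)=1$ and all other entries zero. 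Since $\F_0\equiv 0$ forces $\A^2_1=0$ and $\A^1_2=\F_2$, the span of $e_2$ (in the $\txv_1$ block) and the first coordinate of $\xv^{[2]}$ (in the $\txv_2$ block) is invariant under $\tF_1$ and carries the matrix $\bigl(\begin{smallmatrix}-4 & 1\\ 0 & -4\end{smallmatrix}\bigr)$, so $\tF_1$ is not diagonalizable even though $\F_1$ is diagonal. Second, the small-perturbation escape does not ``leave the complexity estimates intact'': the analysis underlying Theorem~\ref{them1} controls quantities such as $\|e^{\tF_1 t}\|$ through the dissipativity of the linear part, which requires the eigenbasis of $\tF_1$ to be well conditioned; perturbing a Jordan block produces an eigenvector matrix whose condition number diverges as the perturbation size tends to zero, so the hidden constants in (\ref{eq:comg}) blow up. The paper sidesteps this entirely: Problem~\ref{prob3} is stated as the \emph{homogeneous version of Problem~\ref{prob2}} and therefore inherits the standing hypothesis that $\tF_1$ is diagonalizable with negative-real-part spectrum; the extra assumptions on $\F_1$ serve only to let Lemma~\ref{lem1} identify $\textsf{Re}(\tilde{\lam}_1)$ with $\textsf{Re}(\lam_1)$ so that $R_k=R_k^0$. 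You should either invoke that inherited hypothesis or impose an explicit non-resonance condition --- not attempt to prove diagonalizability of $\tF_1$, which is false in general.
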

\begin{proof}
Under the assumptions of Problem \ref{prob3}, $R_k$ reduces to,
\begin{equation}\label{eq:rkr0}
R_k=\frac{(k-1)\sqrt{\sum_{i=1}^{k-1}\|\xv_0\|^{2i}}\sum_{i=2}^k \|\F_i\|}{|\textsf{Re}(\tilde{\lambda}_1)|}=R_k^0,
\end{equation}
as $\textsf{Re}(\tilde{\lambda}_1)=\textsf{Re}(\lambda_1)$ (see Lemma~\ref{lem1}). Hence, the corollary follows from the Theorem \ref{them2}.
\end{proof}

Some remarks follow.

\begin{remark}
For $k=2$, Theorem \ref{them2} reduces to Theorem \ref{them1}.
\end{remark}

\begin{remark}
The quantum algorithm for simulating a $k$-th order polynomial  system (\ref{eq:gen}) involves transforming (\ref{eq:gen}) into a quadratic form (\ref{eq:qd}),  and then invoking the QCL algorithm as outlined in Section \ref{sec:QCquadODE}. We show in the Appendix (Lemma \ref{appexLem}), that the CL of the quadratic form (\ref{eq:qd}) is equivalent to the CL of the $k$-th order polynomial  system  (\ref{eq:gen}). Thus, alternatively, one can directly use the truncated CL (\ref{eq:carlf}) with truncation level $\nt^\prime=\nt(k-1)$. This truncation is equivalent to the chosen truncation level $\nt$ for CL of the quadratic form (\ref{eq:qd}). One can then apply the steps in QCL to the truncated CL of the direct form. Note that this approach is more efficient than the CL of the quadratic form since the quadratic structure (\ref{eq:qd}) introduces redundant equations that can be avoided by working directly with CL of (\ref{eq:gen}).
\end{remark}

\begin{remark}
Note that when $\F_0$ is time-dependent, $\tF_1$ also becomes time-dependent, leading to a time-dependent quadratic system (\ref{eq:qd}). Theorem \ref{them1} cannot be applied in that case and, consequently, Theorem \ref{them2} cannot be generalized for time-dependent $\F_0$ using the transformation approach described above.
\end{remark}

\section{Numerical Example}\label{sec:num}
To numerically demonstrate the proposed approach, we consider the reaction-diffusion PDE. Reaction-diffusion systems arise in many domains including chemistry, biology, geology, physics,  and ecology \cite{volpert2014elliptic}. The simplest reaction-diffusion equation is the 1-D PDE of the form,
\begin{equation}\label{eq:rd}
\partial_t u=\kappa\partial_{yy}u+R(u),
\end{equation}
on a domain $y\in [0,L]$, with initial condition $u(y,0)=u_0(y)$, and boundary conditions $\partial_y u(t,0)=g_0(t), \partial_y u(t,L)=g_L(t), \forall t\geq 0$.
The choice $R(u)=u(1-u)$ yields Fisher's equation that was originally used to describe the spreading of biological populations; the choice $R(u)=u(1-u^2)$ leads to the Newell–Whitehead-Segel equation used to describe Rayleigh–Benard convection; while the choice $R(u)=u^2-u^3$ results in the Zeldovich equation  used in combustion applications, see \cite{gilding2004travelling} for details.

For our demonstration we choose,
\begin{equation}\label{eq:allee}
R(u)=u(1-u)(u-a),
\end{equation}
where $ a\in [0,1/2]$, and captures the Allee effect \cite{bose2017allee}.  The Allee effect has to do with the fact that the fitness of small populations is sometimes negative, i.e., if the population density is too small, the species or group of individuals will not survive. To model this effect, an additional $u-a$ factor is added to the reaction term $R(u)=u(1-u)$ of the Fisher's equation. Furthermore, we consider a zero flux Neumann boundary condition, i.e. $g_0(t)=g_L(t)=0, \forall t\geq 0$, and an initial condition of the form,
\begin{equation}\label{eq:init}
u_0(y)= \begin{cases}
  u_{in}  & \text{ if } 0<y<y^* \\
  0 & \text{ otherwise}
\end{cases},
\end{equation}
where, $u_{in},y^* \in \R$ are constants. We discretize the PDE (\ref{eq:rd}) in space at $\nx$ discrete spatial locations $y_i=(i-1)\Delta y, i=1,\cdots,\nx$, such that $\Delta y=L/(\nx-1)$, and let $\uv(t)=(u(y_1,t),\cdots,u(y_\nx,t))^\tra$.  Then $\uv$  satisfies the ODE,
\begin{equation}\label{eq:rdd}
\dot{\uv}=\F_1\uv+\F_2\uv^{[2]}+\F_3\uv^{[3]},
\end{equation}
with the initial condition $\uv(0)=(u_0(y_1),\cdots,u_0(y_\nx))^\tra$ and
\begin{equation}\label{eq:rdF1}
\F_1=\left(
       \begin{array}{cccccc}
         \alpha_0 & \beta & 0 & \cdots & 0 & 0 \\
         \beta & \alpha & \beta & \cdots & 0 & 0 \\
         \vdots &  \vdots &  \vdots &  \vdots &  \vdots &  \vdots \\
         0 & 0 & 0 & \cdots & \beta & \alpha_0 \\
       \end{array}
     \right),
\end{equation}
where, $\beta=\kappa/(\Delta y)^2$, $\alpha_0=a-\beta$,  and $\alpha=a-2\beta$, 
\begin{equation}\label{eq:rdF2}
\F_2(i,j)= \begin{cases}
  1+a  & \text{ if } j=(i-1)\nx+i \\
  0 & \text{ otherwise},
\end{cases}
\end{equation}
and
\begin{equation}\label{eq:rdF3}
\F_3(i,j)= \begin{cases}
  -1  & \text{ if } j=(i-1)\nx^2+(i-1)\nx+i \\
  0 & \text{ otherwise}.
\end{cases}
\end{equation}
Note that the first and last diagonal entries in $\F_1$~(\ref{eq:rdF1}) are different from the other diagonal entries due to the zero flux Neumann boundary condition. The equation (\ref{eq:rdd}) is of the form (\ref{eq:gen}) with $k=3$ and $\F_0\equiv 0$, and, consequently, we generate the Carleman system (\ref{eq:carl}) using the steps outlined previously. The resulting Carleman linear ODE is then integrated using the forward Euler method.

\begin{figure}[hbt!]
\centerline{\includegraphics[scale=0.55]{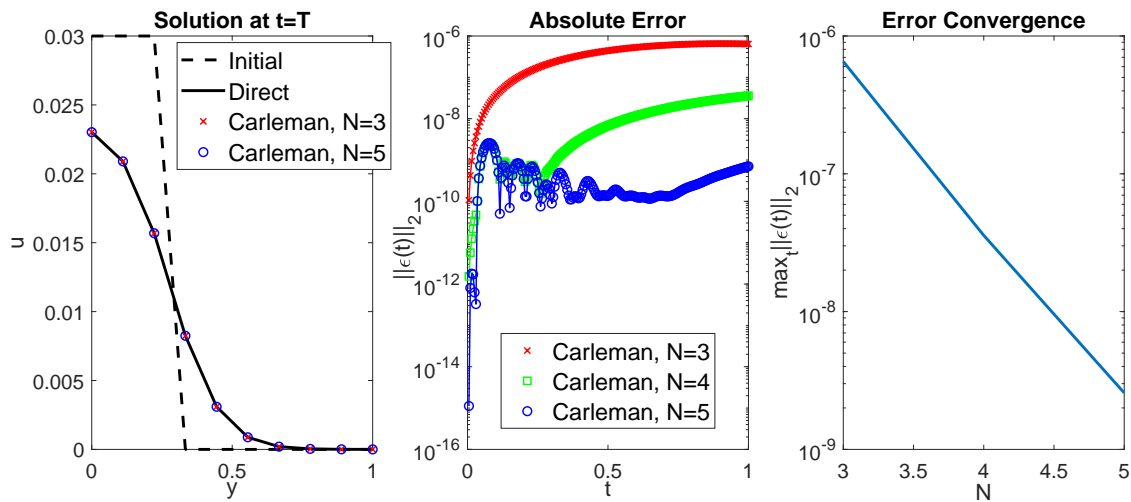}}
\caption{Results for $L=1,a=1/4,\kappa=(\Delta y)^2,y^*=0.3,u_{in}=0.03,\text{and}\,\,T=1$ which leads to $R^0_k<1$. (Left) Initial condition and solution plotted at $t=T$. (Center) $l_2$ norm of the absolute error between the Carleman solutions at various truncation levels $\nt$ and  the direct solution obtained via direct integration of (\ref{eq:rdd}). (Right) Convergence of the time-maximum error as a function of truncation level $\nt$.}
\label{fig:results1}
\end{figure}

\begin{figure}[hbt!]
\centerline{\includegraphics[scale=0.55]{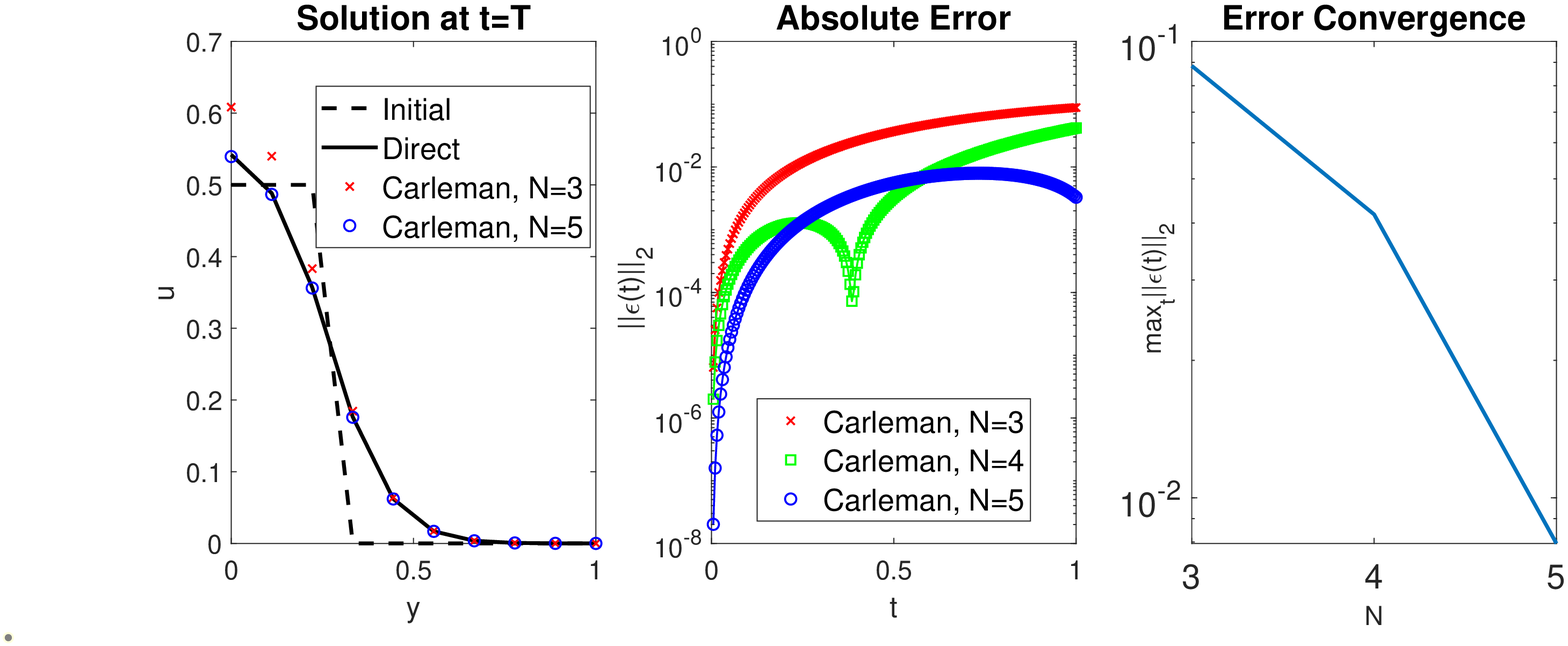}}
\caption{Similar as Figure \ref{fig:results1}, but for $u_{in}=0.5$ which results in $R^0_k>1$.}
\label{fig:results2}
\end{figure}

For the numerical demonstration, we use parameters: $L=1,a=1/4,\kappa=(\Delta y)^2,y^*=0.3,u_{in}=0.03, \text{and}\,\, T=1$ , and found that spatial discretization with $\nx = 10$ was adequate to resolve the solution. Figure \ref{fig:results1} compares the Carleman solution for different levels of truncation $\nt$ with the direct solution obtained via direct integration of (\ref{eq:rdd}). By inserting matrices $\F_i,i=0,\hdots,3$  from equation  (\ref{eq:rdd}) into the definition of $R^0_k$ for the Neumann boundary conditions, we find that $\textsf{Re}(\lambda_1)<0$ and $R^0_k=0.94<1$, as required.
We can see from the absolute error plot in Figure \ref{fig:results1}, that, as expected, the maximum absolute error as a function of time decreases exponentially with increasing truncation levels $\nt$ (in this example, the maximum Carleman truncation level is $\nt = 5$). 

Next, we consider the case for $u_{in}=0.3$, while all other parameters remain the same. Here, we find that $\textsf{Re}(\lambda_1)<0$, but $R^0_k=20.62>1$. Even though this does not satisfy the requirement $R^0_k< 1$, we see from  Figure \ref{fig:results2}, that the maximum absolute error
over time decreases exponentially as the truncation level $\nt$ is incremented.  This suggests that, in this example, the error of the classical Carleman method converges exponentially with $\nt$, even though $R^0_k>1$. Similar convergence phenomena was also observed in the Burgers PDE example considered in~\cite{liu2021efficient}. It is likely that the $R^0_k<1$ condition is, perhaps, too conservative is most cases. We note that the error in this case is much larger compared to the previous case for the same levels of truncation (compare Figures~\ref{fig:results1} and~\ref{fig:results2}).


\section{Conclusions}\label{sec:conc}
The emergence of quantum computers presents a novel opportunity for addressing several computational bottlenecks and challenges that exist in classically simulating dynamical systems that are represented by nonlinear ordinary or partial differential equations. However, to extend the range of applicability of quantum computers to these settings, there is an urgent need to develop novel quantum algorithms. One promising line of work for simulating dynamical systems on quantum computers has focused on exploiting the quantum linear systems algorithm (QLSA) for exponential speed-up. In particular, in previous work~\cite{liu2021efficient}, the authors have used Carleman linearization along with the forward Euler numerical scheme to convert quadratic ODEs into a linear system of algebraic equations. The resulting linear system was constructed in a manner so as to be amenable to the QLSA framework. However, given that was only applicable to quadratic vector fields, it was restricted in its range of applicability.

In this paper, we have generalized the Carleman linearization based framework for simulating quadratic polynomial ODEs on quantum computers~\cite{liu2021efficient} to nonlinear ODEs with $k$-th degree polynomial vector fields for arbitrary (finite) values of $k$. Our approach is based on either mapping the $k$-th degree polynomial ODE to a higher dimensional quadratic polynomial ODE and applying the Carleman linearization or using the Carleman framework to directly cast the $k$-th degree polynomial ODE into a corresponding linear form. We then generalized the existing complexity analysis in~\cite{liu2021efficient} and proved an explicit polynomial computational scaling on the degree $k$ of the polynomial vector field. In future work, we plan to exploit the novel Carleman convergence analysis established in \cite{amini2022carleman}, to extend the our framework for the setting of time-dependent $k$-th degree polynomial vector fields and bound the truncation error as $k\rightarrow \infty$. Also, we plan to explore extensions for simulating arbitrary (non-polynomial) dynamical systems on quantum computers. Although, in general, one can approximate such dynamical systems by polynomial vector fields, these approximations may not hold for structurally unstable dynamical systems.

\section*{Appendix}\label{appex}
\begin{lemma}\label{appexLem}
The CL of the transformed quadratic form (\ref{eq:qd}) with truncation level $\nt$ is equivalent to the CL  of $k$-th order polynomial  system  (\ref{eq:gen}) with truncation level $\nt^\prime=\nt(k-1)$.
\end{lemma}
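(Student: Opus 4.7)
The plan is to exhibit an explicit identification between the variables of the two truncated CL systems and then verify that their dynamics coincide component-by-component. Let $\tzv_i = \txv^{[i]}$ denote the Carleman variables associated with the quadratic form (\ref{eq:qd}), and let $\zv_j = \xv^{[j]}$ denote those associated with the direct $k$-th order system (\ref{eq:gen}). The starting observation is purely combinatorial: since $\txv = (\xv^\tra,(\xv^{[2]})^\tra,\dots,(\xv^{[k-1]})^\tra)^\tra$, every component of $\tzv_i = \txv^{[i]}$ is a Kronecker product of the form $\xv^{[j_1]}\otimes\xv^{[j_2]}\otimes\cdots\otimes\xv^{[j_i]}$ with each $j_\ell\in\{1,\dots,k-1\}$, which is (up to a fixed permutation of indices) a component of $\xv^{[j_1+\cdots+j_i]}$. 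Consequently the components of $\tzv_i$ are exactly the components of $\zv_j$ for $j=i,i+1,\dots,i(k-1)$, counted with the appropriate multiplicities. In particular, the collection $\{\tzv_1,\dots,\tzv_{\nt}\}$ and the collection $\{\zv_1,\dots,\zv_{\nt(k-1)}\}$ index the same set of monomials in the entries of $\xv$.

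Next I would construct the natural linear map $\Pi$ that sends each component of $\tzv_i$ to the corresponding component of $\zv_{j_1+\cdots+j_i}$, and let $\Sigma$ be the linear map in the other direction that places a single chosen copy of each $\zv_j$ into its canonical position inside the appropriate $\tzv_i$'s. Both maps are deterministic matrices made up of $0$'s and $1$'s. The next step is to show that $\Pi$ intertwines the two truncated dynamical systems, i.e.\ if $\tzvh$ solves the truncated CL of (\ref{eq:qd}) at level $\nt$, then $\Pi\tzvh$ solves the truncated CL of (\ref{eq:gen}) at level $\nt'=\nt(k-1)$, and conversely $\Sigma\zvh$ solves the quadratic CL when $\zvh$ solves the direct CL. This is just a matter of unrolling the definitions: plugging $\txv = \sum_{j=1}^{k-1}(\text{block embeddings of } \xv^{[j]})$ into $\dot{\txv} = \tF_0+\tF_1\txv+\tF_2\txv^{[2]}$ and comparing coefficients with the direct equations $\dot{\zv}_j = \sum_{\ell=0}^{k}\A^{j}_{j+\ell-1}\zv_{j+\ell-1}$, using the recurrence (\ref{eq:req}) and the block structure of $\tF_1$ and $\tF_2$ from (\ref{eq:tF1})--(\ref{eq:tF2}).

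The final step is to check that the two null-closure prescriptions match. In the quadratic form, truncating at order $\nt$ discards every $\tzv_i$ with $i>\nt$, which, via the identification above, discards exactly those monomials of total $\xv$-degree strictly greater than $\nt(k-1)$; this is precisely the null-closure of the direct CL at truncation level $\nt'=\nt(k-1)$. Combined with the intertwining relation of the previous paragraph, this gives the stated equivalence.

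The main obstacle I anticipate is bookkeeping the redundancy: the map $\Pi$ is many-to-one because the same monomial of $\xv$ appears several times across the $\tzv_i$'s, and one must verify that all the redundant copies evolve consistently (i.e.\ that $\ker\Pi$ is invariant under the quadratic-CL dynamics) so that $\Pi$ descends to a well-defined map between truncated systems. This consistency is forced by the Kronecker product structure — the block rows of $\tF_1$ and $\tF_2$ in (\ref{eq:tF1})--(\ref{eq:tF2}) are precisely the rows $\A^{i}_{\cdot}$ associated with $\xv^{[i]}$ — so the argument reduces to checking that, for every $i\in\{1,\dots,k-1\}$, the $i$-th block row of the quadratic CL reproduces equation (\ref{eq:genz}) for $\zv_i$, and that the extra cross-terms generated by expanding $(\xv^{[i]}\otimes\xv^{[j]})$ inside $\tzv_2$ reproduce equation (\ref{eq:genz}) for $\zv_{i+j}$ up to $i+j\le \nt(k-1)$. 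Once this is confirmed, the equivalence follows.
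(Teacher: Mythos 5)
Your starting point --- identifying each component of $\tzv_i=\txv^{[i]}$ with a component of $\zv_j=\xv^{[j]}$ for $j=j_1+\cdots+j_i$ --- is the same combinatorial observation that drives the paper's proof, but the way you close the argument has a genuine gap, and it is exactly at the obstacle you flag in your final paragraph and then wave away. The consistency of the redundant copies is \emph{not} forced by the Kronecker structure once the null closure is applied. Concretely, take $k=3$ and $\nt=2$, so $\txv=(\xv^\tra,(\xv^{[2]})^\tra)^\tra$ and $\nt'=4$. The copy of $\xv^{[3]}$ sitting inside $\tzv_2$ as $\txv_1\otimes\txv_2$ evolves by the product rule, and among its degree-$4$ terms the contribution $(\A^1_2\otimes\Id_{\nx^2\times\nx^2})(\txv_2\otimes\txv_2)$ lies in $\tzv_2$ and is retained, while the contribution $(\Id_{\nx\times\nx}\otimes\A^2_3)(\txv_1\otimes\txv_1\otimes\txv_2)$ lies in $\tzv_3$ and is killed by the quadratic closure at level $\nt=2$ --- even though the direct CL truncated at $\nt'=4$ retains the full degree-$4$ term $\A^3_4\zv_4$. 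The other copy, $\txv_2\otimes\txv_1$, loses a \emph{different} subset of its degree-$4$ terms. So after truncation, distinct copies of the same monomial obey distinct equations: $\ker\Pi$ is not invariant under the truncated quadratic-CL dynamics, $\Sigma\zvh$ does not solve the truncated quadratic system, and your intertwining claims, which are valid for the infinite untruncated systems, break precisely at the closure boundary. Relatedly, your assertion that truncating the quadratic CL at order $\nt$ discards ``exactly those monomials of total $\xv$-degree strictly greater than $\nt(k-1)$'' is false: the discarded blocks $\tzv_i$ with $i>\nt$ also contain redundant copies of monomials of degree as low as $\nt+1\leq\nt(k-1)$ whenever $k\geq 3$.

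The paper avoids this trap by proving only a one-directional containment of the \emph{unique} equations rather than a two-way conjugacy of the full truncated systems. For each degree $j=i(k-1)+p$ it singles out the new representative copy entering at $\tzv_{i+1}$, writes its exact evolution (\ref{eq:genCL2}) with the intermediate-degree terms placed at copies inside $\tzv_i$ and $\tzv_{i+1}$, so that only the terms $\txv_{p'}\otimes\txv_{(i+1)(k-1)}$ --- all of degree strictly greater than $(i+1)(k-1)$ --- are deferred to $\tzv_{i+2}$; see (\ref{eq:genztip})--(\ref{eq:B}). For these representative rows the quadratic closure at level $\nt$ drops precisely terms of degree exceeding $\nt(k-1)$, matching the direct closure at $\nt'=\nt(k-1)$, and no claim about the truncated dynamics of the redundant copies is ever needed. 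To repair your version you would have to either restrict attention to this canonical subsystem and verify that the canonical rows can be arranged never to reference non-canonical copies (which is the substance of the paper's argument), or weaken your conclusion from ``the two truncated systems are conjugate via $\Pi$ and $\Sigma$'' to ``every unique equation of the quadratic CL at level $\nt$ appears in the direct CL at level $\nt(k-1)$,'' which is what the lemma actually uses.
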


\begin{proof}
First note that the system (\ref{eq:qd}) can be expressed as,
\begin{eqnarray}
  \dot{\txv}_1 &=& \A_1^1\txv_1+\A^1_2\txv_2+\cdots+\A^1_{k-1}\txv_{k-1}+\A^1_k\txv_1\otimes \txv_{k-1}+\A^1_0, \notag\\
  \dot{\txv}_2 &=& \A_1^2\txv_1+\A^2_2\txv_2+\cdots+\A^2_{k-1}\txv_{k-1}+\A^2_k\txv_1\otimes \txv_{k-1}+\A^2_{k+1}\txv_2\otimes \txv_{k-1}, \notag \\
   &\vdots & \label{eq:sysCL1}\\
  \dot{\txv}_{k-1} &=& \A^{k-1}_{k-2}\txv_{k-2}+\A^{k-1}_{k-1}\txv_{k-1}+\A^{k-1}_{k}\txv_1\otimes \txv_{k-1}+\cdots+\A^{k-1}_{2(k-1)}\txv_{k-1}\otimes\txv_{k-1}, \notag.
\end{eqnarray}
where, $\txv_i=\xv^{[i]}$. Let $\txv=(\txv_1^\tra,\cdots,(\txv_{k-1})^\tra)^\tra$ and $\tzv_i=\txv^{[i]}$ be the variables introduced for the CL of the above system, and satisfy equations of the form
\begin{equation}\label{eq:genzti}
\dot{\tzv}_{i+1}=\tilde{\A}^{i+1}_{i}\tzv_{i}+\tilde{\A}^{i+1}_{i+1}\tzv_{i+1}+\tilde{\A}^{i+1}_{i+2}\tzv_{i+2},
\end{equation}
for appropriate, $\tilde{\A}^{i+1}_{i},\tilde{\A}^{i+1}_{i+1}$ and $\tilde{\A}^{i+1}_{i+2}$.

To prove the result, we next show that every unique equation of the form (\ref{eq:genzti}), appearing in  the CL of (\ref{eq:sysCL1}), also appears in the CL of (\ref{eq:gen}). Firstly, note that, the above system of equations (\ref{eq:sysCL1}) is equivalent to the system of equations (\ref{eq:genz}) for $i=1,\hdots,k-1$ which is the first set of equations, i.e. for $\tzv_{1}$,  appearing in the CL (\ref{eq:genzti}). Secondly, in going from $\tzv_{i}$ to $\tzv_{i+1}=\txv\otimes\tzv_{i}$, the only new powers of $\xv$ introduced in the vector $\tzv_{i+1}$ are $\xv^{[{i(k-1)+p}]}(\equiv \txv_{i(k-1)+p}), \text{where}\,\,p=1,\hdots,k-1$. From (\ref{eq:genz}), these variables satisfy,
\begin{eqnarray}
  \dot{\txv}_{i(k-1)+1} &=& \A^{i(k-1)+1}_{i(k-1)}\txv_{i(k-1)}+\A^{i(k-1)+1}_{i(k-1)+1}\txv_{i(k-1)+1}+\cdots+\A^{i(k-1)+1}_{i(k-1)+k-1}\txv_{i(k-1)+k-1}\notag \\
  &+&\A^{i(k-1)+1}_{i(k-1)+k}\txv_{1}\otimes \txv_{(i+1)(k-1)}, \notag\\
  \dot{\txv}_{i(k-1)+2} &=& \A^{i(k-1)+2}_{i(k-1)+1}\txv_{i(k-1)+1}+\A^{i(k-1)+2}_{i(k-1)+2}\txv_{i(k-1)+2}+\cdots+\A^{i(k-1)+2}_{i(k-1)+k-1}\txv_{i(k-1)+k-1}\notag \\
  &+&\A^{i(k-1)+2}_{i(k-1)+k}\txv_1\otimes \txv_{(i+1)(k-1)}+\A^{i(k-1)+2}_{i(k-1)+k+1}\txv_2\otimes \txv_{(i+1)(k-1)}, \notag \\
   &\vdots & \label{eq:genCL2}\\
  \dot{\txv}_{i(k-1)+k-1} &=& \A^{i(k-1)+k-1}_{i(k-1)+k-2}\txv_{i(k-1)+k-2}+\A^{i(k-1)+k-1}_{i(k-1)+k-1}\txv_{i(k-1)+k-1}+\cdots\notag\\
  &+&\A^{i(k-1)+k-1}_{i(k-1)+2(k-1)}\txv_{k-1}\otimes\txv_{(i+1)(k-1)} \notag.
\end{eqnarray}
Note that the term $\txv_{i(k-1)}$ appears in $\tzv_{i}$, the terms $\txv_{i(k-1)+p},p=1,\hdots,k-1$ appear in $\tzv_{i+1}$, and the terms $\txv_{p}\otimes \txv_{(i+1)(k-1)},p=1,\hdots,k-1$ appear in $\tzv_{i+2}=\txv\otimes\tzv_{i+1}$. Hence, the system (\ref{eq:genCL2}) can be written in the form,
\begin{equation}\label{eq:genztip}
\left(\begin{array}{c}
  \dot{\txv}_{i(k-1)+1}  \\
  \dot{\txv}_{i(k-1)+2} \\
  \vdots \\
  \dot{\txv}_{i(k-1)+k-1}
\end{array}\right)
=\tilde{\B}^{i+1}_{i}\tzv_{i}+\tilde{\B}^{i+1}_{i+1}\tzv_{i+1}+\tilde{\B}^{i+1}_{i+2}\tzv_{i+2},
\end{equation}
for appropriate  $\tilde{\B}^{i+1}_{i},\tilde{\B}^{i+1}_{i+1}$ and $\tilde{\B}^{i+1}_{i+2}$. For example, $\tilde{\B}^{i+1}_{i+1}$, will take the form, 
\begin{equation}\label{eq:B}
\tilde{\B}^{i+1}_{i+1}=\left(
        \begin{array}{cccccc}
        \Oz_1  & \A^{i(k-1)+1}_{i(k-1)+1} & \A^{i(k-1)+1}_{i(k-1)+2}& \A^{i(k-1)+1}_{i(k-1)+3} & \cdots & \A^{i(k-1)+1}_{i(k-1)+k-1} \\
        \Oz_2  & \A^{i(k-1)+2}_{i(k-1)+1} & \A^{i(k-1)+2}_{i(k-1)+2} & \A^{i(k-1)+2}_{i(k-1)+3} & \cdots & \A^{i(k-1)+2}_{i(k-1)+k-1} \\
        \Oz_3  & 0 & 0 & \cdots & \cdots & \cdot  \\
        \vdots  & \vdots & \vdots  & \vdots  & \vdots  & \vdots  \\
        \Oz_{k-1}  & 0 & 0 & \cdots &  \A^{i(k-1)+k-1}_{i(k-1)+k-2}& \A^{i(k-1)+k-1}_{i(k-1)+k-1} \\
        \end{array}
      \right),
\end{equation}
where, $\Oz_p$ are zero matrices of size $\nx^{i(k-1)+p}\times (\nx_k^{i+1}-(k-1))$ for $p=1,\cdots,k-1$. Expressions for $\tilde{\B}^{i+1}_{i}$ and $\tilde{\B}^{i+1}_{i+2}$ take a similar form, which we omit here for brevity. Finally, note that
$\tilde{\B}^{i+1}_{i},\tilde{\B}^{i+1}_{i+1}$ and $\tilde{\B}^{i+1}_{i+2}$ are sub-matrices defining $\tilde{\A}^{i+1}_{i},\tilde{\A}^{i+1}_{i+1}$ and $\tilde{\A}^{i+1}_{i+2}$, respectively, since the vectors on the LHS of equation~(\ref{eq:genztip}) are elements of the vector $\dot{\tzv}_{i+1}$ appearing in (\ref{eq:genzti}). The system of equations~(\ref{eq:genztip}), thus, represents a subsystem of equations appearing in the CL (\ref{eq:genzti}). Hence, by choosing  $\nt^\prime=\nt(k-1)$ for truncation level of the CL of (\ref{eq:gen}), we will obtain a system of equations representing a set of all unique equations appearing in the CL of (\ref{eq:qd}) with the truncation level $\nt$.
\end{proof}

\bibliographystyle{plain}
\bibliography{qcbib}

\end{document}